\newtheorem{theorem}{Theorem}
\newtheorem{lemma}[theorem]{Lemma}
\newtheorem{conjecture}[theorem]{Conjecture}
\newtheorem{proposition}[theorem]{Proposition}
\newcommand{\e}{\expectation}
\newcommand{\giventhat}{\mid}
\newcommand{\evaluatedat}[2]{\left.{#1}\right\rvert_{#2}}
\newcommand{\RR}{\mathbb{R}}
\newcommand{\expectation}{\operatorname{\mathbb{E}}}
\newcommand{\vol}{\operatorname{vol}}
\newcommand{\conv}{\operatorname{conv}}
\newcommand{\suchthat}{\mathrel{:}}
\newcommand{\inner}[2]{#1\cdot #2}
\newcommand{\norms}[1]{{\lVert#1\rVert}^2}
\newcommand{\bdry}{\operatorname{bdry}}
\newcommand{\norm}[1]{{\lVert#1\rVert}}
\newcommand{\eps}{\epsilon}
\newcommand{\dbar}[1]{\bar{\bar{#1}}}
\newcommand{\abs}[1]{\left\lvert#1\right\rvert}
\newcommand{\rowmatrix}[4]{
\begin{pmatrix}
#1 & #2 \\
#3 & #4
\end{pmatrix}
}
\newcommand{\keywords}[1]{\emph{Keywords:} #1}
\newcommand{\email}[1]{\href{mailto:#1}{\texttt{#1}}}
\title{On the monotonicity of the expected volume of a random simplex}
\author{Luis Rademacher\footnote{Part of this work was done while the author was a Postdoctoral Fellow at the College of Computing of the Georgia Institute of Technology.}\\
Computer Science and Engineering\\
Ohio State University\\
\email{lrademac@cse.ohio-state.edu}}
\date{}
\begin{document}
\maketitle
\begin{abstract}
Let a random simplex in a $d$-dimensional convex body be the convex hull of $d+1$ random points from the body. We study the following question: As a function of the convex body, is the expected volume of a random simplex monotone non-decreasing under inclusion? We show that this holds if $d$ is 1 or 2, and does not hold if $d \geq 4$. We also prove similar results for higher moments of the volume of a random simplex, in particular for the second moment, which corresponds to  the determinant of the covariance matrix of the convex body. These questions are motivated by the slicing conjecture.
\end{abstract}

\keywords{random polytope, Sylvester's problem, random determinant, slicing conjecture}

\section{Introduction}
For a $d$-dimensional convex body $K$, let $V_K$ denote the (random) volume of the convex hull of $d+1$ independent random points in $K$. In \cite{MeckesSanAntonio}, Mark Meckes asked whether for any pair of convex bodies $K,L \subseteq \RR^d$, $K \subseteq L$ implies
\[
\e (V_K) \leq \e (V_L).
 \]
His ``strong conjecture'' claims that this holds. He also stated the following ``weak conjecture'': there exists a universal constant $c>0$ such that $K \subseteq L$ implies
\[
\e (V_K) \leq c^d \e (V_L).
\]
Clearly, the strong conjecture implies the weak conjecture.
He also asked about the natural generalization to more than $d+1$ points, random polytopes, and higher moments.
Matthias Reitzner also discusses the problem later in \cite{ReitznerToulouse}; he asked whether $K \subseteq L$ implies
\[
\e_{X_0, \dotsc, X_n \in K} (\vol \conv X_0,\dotsc, X_n) \leq \e_{X_0, \dotsc, X_n \in L} ( \vol \conv X_0,\dotsc, X_n)
\]
for arbitrary $n$.

While these are natural questions in the understanding of random polytopes, one of their main motivations comes from their connection with the slicing conjecture (also known as the hyperplane conjecture or slicing problem): All $d$-dimensional convex bodies of volume 1 have a hyperplane section of $(d-1)$-dimensional volume at least a universal positive constant. Meckes's weak conjecture is equivalent to the slicing conjecture (see the Appendix). The slicing conjecture \cite{Bourgain1986, Ball1988, Giannopoulos} is one of the outstanding open problems in asymptotic convex geometry; one of the reasons is its connections with classical problems in convexity, like the Busemann-Petty problem and Sylvester's problem \cite{Giannopoulos}.

In this work we show that Meckes's strong conjecture has a negative answer if $d\geq 4$ and a positive answer if $d$ is $1$ or $2$. More precisely, we show:
\begin{theorem}[random simplex]\label{thm:simplexMonotonicity}
If $d$ is $1$ or $2$, and $K$, $L$ are two $d$-dimensional convex bodies, then $K\subseteq L$ implies
\[
\e (V_K) \leq \e (V_L).
 \]
If $d \geq 4$, then there exist two convex bodies $K \subseteq L \subseteq \RR^d$ such that
\[
\e (V_K) > \e (V_L).
\]
\end{theorem}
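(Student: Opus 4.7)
The theorem splits into a positive result for $d \leq 2$ and a counterexample for $d \geq 4$, which I would attack with essentially independent methods.

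The case $d = 1$ is immediate: for an interval $K$ of length $\ell$, $\e(V_K) = \ell/3$. For $d = 2$, I would interpolate from $K$ to $L$ through a nested family of convex bodies $(K_t)_{t \in [0,1]}$ with $K_0 = K$ and $K_1 = L$, and establish $\frac{d}{dt}\e(V_{K_t}) \geq 0$ along the deformation. A first-order perturbation calculation shows that if at time $t$ an infinitesimal cap is appended near a point $x^* \in \bdry K_t$, then
\[
\frac{d}{dt}\e(V_{K_t}) \;\propto\; J_{K_t}(x^*) - \e(V_{K_t}), \qquad J_K(x^*) := \e_{X_1, X_2 \in K}\vol(\conv\{X_1, X_2, x^*\}).
\]
The positive part thus reduces to the planar inequality $J_K(x^*) \geq \e(V_K)$ for every convex body $K \subseteq \RR^2$ and every $x^* \notin \operatorname{int} K$. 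The restriction to $x^*$ outside the interior is essential: the map $x^* \mapsto J_K(x^*)$ is a convex function of $x^*$ (each $\vol(\conv\{x_1,x_2,x^*\})$ equals $|x_1 - x_2|/2$ times the absolute value of an affine function of $x^*$), and its average over $K$ equals $\e(V_K)$ by the symmetry of the random simplex, so $J_K$ must dip below $\e(V_K)$ somewhere in the interior.

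Proving this planar inequality for $x^*$ outside $K$ is the main obstacle of the positive half, and my plan is a Blaschke-type shaking argument. Fix the point $x^*$ and shake $K$ along chords parallel to a chosen direction in such a way that the quantity $J_K(x^*) - \e(V_K)$ only decreases under each shaking step; iterating over appropriately chosen directions drives $K$ to a canonical form (e.g.\ a triangle or a reflection-symmetric body aligned with the line through $x^*$ and the centroid of $K$), on which the inequality can be verified by direct integration. This shaking step is the technical heart of the positive direction and matches the ``critical for journal: Blaschke's shaking in lemma 14'' item in the source.

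For $d \geq 4$, I would produce an explicit counterexample $K \subsetneq L$ with $\e(V_K) > \e(V_L)$. A natural candidate is $K = B \cup C'$ and $L = B \cup C$, where $B$ is a half-ball in $\RR^d$, $C$ is a thin cone attached at its flat face, and $C'$ is $C$ with its tip slightly truncated. By the same perturbation identity in dimension $d$, the sign of $\e(V_L) - \e(V_K)$ is controlled by $J_K(x^*) - \e(V_K)$ at the tip $x^*$; the plan is to tune the cone's length and base radius so that, for $d \geq 4$, this quantity becomes strictly negative while the cone volume remains negligible compared to the half-ball. The chief obstacle here is a clean leading-order asymptotic expansion of $\e(V_K)$ and $\e(V_L)$ in the cone's parameters, showing that the sign change first appears at $d = 4$ and survives all higher-order corrections.
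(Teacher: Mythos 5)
Your overall architecture is the same as the paper's: a Crofton-type derivative argument reduces monotonicity to the boundary-point inequality $\e(V_K)\leq \e\bigl(\vol\conv(x,X_1,\dotsc,X_d)\bigr)$ for all $x\in\bdry K$ (this is exactly Lemma \ref{lem:randomSimplexCharacterization} via Proposition \ref{pro:Crofton}), and your half-ball-with-thin-cone versus truncated-cone pair is the paper's counterexample for $d\geq 4$. The gaps are at the two decisive quantitative steps, which your proposal leaves as plans. For $d=2$, your idea of shaking so that $J_K(x^*)-\e(V_K)$ decreases monotonically toward a canonical body is not supported and is doubtful as stated: Steiner symmetrization decreases \emph{both} $J_K(x^*)$ and $\e(V_K)$ (Groemer/Busemann), so its effect on the difference is uncontrolled, and the two terms have different extremizers (the triangle maximizes $\e(V_K)/\vol K$ by Theorem \ref{thm:blaschke}, while the half-disk minimizes $J_K(x)/\vol K$), so there is no evident canonical form for the difference itself. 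The paper sidesteps this by decoupling through the volume: $\e(V_K)\leq\frac{1}{12}\vol K$ (Blaschke) versus $J_K(x)\geq\frac{8}{9\pi^2}\vol K$ (Lemma \ref{lem:plane}, proved by Steiner symmetrization, a shaking lemma that needs an auxiliary symmetry hypothesis --- Lemma \ref{lem:shaking} --- and Busemann's inequality), and the argument closes only because $\frac{8}{9\pi^2}>\frac{1}{12}$; some such numerical decoupling, or a genuinely new monotonicity for the difference, is what your sketch is missing.

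For $d\geq 4$, ``tune the cone and expand asymptotically'' conceals the actual content: in the limit of a vanishing cone the statement you must prove is the fixed inequality $\e_{X_i\in K}\bigl(\vol\conv(0,X_1,\dotsc,X_d)\bigr)<\e(V_K)$ for $K$ the half-ball and $0$ the center of its flat face. The paper obtains this from Blaschke--Groemer (Theorem \ref{thm:blaschkegroemer}), the exact ball formulas (Theorems \ref{thm:ball} and \ref{thm:ballorigin}) together with the symmetry identifying the half-ball moment with the ball moment, and the $\kappa_{d-1}/\kappa_d$ estimates (Lemma \ref{lem:ratio}); none of these ingredients, or substitutes for them, appear in your plan, and it is precisely this computation that isolates $d\geq 4$ (the case $d=3$ is left open in the paper, so your expectation of locating ``where the sign first changes'' at $d=4$ is neither needed nor currently available). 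Two smaller points: the reduction only requires the inequality at boundary points, and converting a failure at the non-extreme point $0$ (center of the flat face) into an honest nested pair requires making the point extreme first --- this is exactly why the cone is attached and then truncated (cf.\ the approximation footnote in the proof of Lemma \ref{lem:detCharacterization}); your derivative heuristic should be routed through that step rather than treated as immediate.
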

For the case $d=3$, numerical integration suggests that the same counterexample used for $d \geq 4$ works for $d=3$. Certain approximations used in those integrals in the proof for higher $d$ fail to give a proof for $d=3$, while an exact evaluation of the integrals looks somewhat involved and is left as an open question.

From the proof of Theorem \ref{thm:simplexMonotonicity} one can infer the following counterexample: In $d$ dimensions, let $L$ be the convex hull of a half-ball (say, the unit ball with the constraint $x_1\geq 0$) and a point at distance $\epsilon>0$ from the center of the ball (say, the point $(-\epsilon, 0, \dotsc, 0)$). That is, $L$ is the union of a half-ball and a cone. Let $K$ be $L$ with the tip of the cone truncated at distance $\delta>0$ (say, $K = L \cap \{x \suchthat x_1 \geq -\epsilon + \delta\}$). Then the proof of Theorem \ref{thm:simplexMonotonicity} shows that the pair $K$, $L$ is a counterexample to the monotonicity for $d \geq 4$ and $\epsilon$, $\delta$ sufficiently small. Numerical integration suggests the same for $d=3$.

The same counterexample and analysis work for higher moments and all dimensions larger than 1 (it is easy to see that for dimension 1 the monotonicity holds for all moments). More precisely, we show:
\begin{theorem}[higher moments]\label{thm:higherMoments}
If $d$ is 2 or 3, then there exist an integer $k_0 \geq 1$ and two convex bodies $K \subseteq L \subseteq \RR^d$ such that for any integer $k \geq k_0$ we have
\[
\e (V_K^k ) > \e (V_L^k ).
\]
If $d \geq 4$, then there exist two convex bodies $K \subseteq L \subseteq \RR^d$ such that for any integer $k \geq 1$ we have
\[
\e (V_K^k ) > \e (V_L^k ).
\]
\end{theorem}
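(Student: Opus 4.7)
The plan is to use the same family of convex bodies as in the counterexample to Theorem~\ref{thm:simplexMonotonicity}: let $L$ be the convex hull of the unit half-ball $\{x\in\RR^d\suchthat \norm{x}\leq 1,\ x_1 \geq 0\}$ together with the apex $-\epsilon e_1$, and let $K_\delta = L \cap \{x_1 \geq -\epsilon + \delta\}$. As $\delta\to 0^+$ the truncated tip $T_\delta = L\setminus K_\delta$ has volume $v = \Theta(\delta^d/\epsilon^{d-1})$. First, I would expand $\e(V_{K_\delta}^k) - \e(V_L^k)$ to first order in $v$. Splitting $\int_{L^{d+1}} V^k$ according to the number $j$ of vertices lying in $T_\delta$, each $j\geq 2$ term is $O(v^j)$ because $V$ is bounded; combining this with the expansion of $\vol(L)^{-(d+1)}$ gives
\[
\e(V_L^k) - \e(V_{K_\delta}^k) = (d+1)\,\frac{v}{\vol(L)}\,\e(V_L^k)\left[\frac{\e_L[V^k\giventhat X_0 = -\epsilon e_1]}{\e_L[V^k]} - 1\right] + O(v^2).
\]
Hence the desired strict inequality $\e(V_{K_\delta}^k) > \e(V_L^k)$ for small $\delta>0$ is equivalent to $\e_L[V^k\giventhat X_0 = -\epsilon e_1] < \e_L[V^k]$, which by continuity as $\epsilon \to 0$ reduces further to the half-ball inequality
\[
\e_B[V^k\giventhat X_0 = 0] < \e_B[V^k],
\]
where $B$ is the unit half-ball.

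Next I would verify this half-ball inequality in each case of the theorem. For $d\geq 4$ and $k=1$, it is precisely the content of the half-ball computation in the proof of Theorem~\ref{thm:simplexMonotonicity}. For $d\geq 4$ and general $k\geq 1$, I would re-run the same $\epsilon$-expansion of the two integrals with $V^k$ in place of $V$, checking that the leading-order coefficient keeps the same sign for every $k$, since the sign is controlled by a geometric factor that does not change when the integrand is raised to a positive integer power. For $d\in\{2,3\}$ and $k$ large I would instead use a soft asymptotic argument: $\e_B[V^k]^{1/k} \to \operatorname{ess\,sup} V$ and similarly for the conditional moment. For $\epsilon$ sufficiently small the global maximum simplex in $L$ lies strictly inside the half-ball (so its volume $M$ does not involve the apex), while the maximum simplex with one vertex at $-\epsilon e_1$ is strictly smaller (for $d=2$ these values are $1$ and $(1+\epsilon)/2$ respectively, with analogous strict gaps in higher dimensions). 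Consequently the ratio $\e_L[V^k\giventhat X_0 = -\epsilon e_1]/\e_L[V^k]$ tends to $0$ as $k\to\infty$, so the strict inequality holds for all $k \geq k_0 = k_0(d,\epsilon)$.

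The main obstacle will be uniformity in $k$ in the $d\geq 4$ case, since the statement requires a single pair $(K,L)$ working simultaneously for every $k\geq 1$: Theorem~\ref{thm:simplexMonotonicity} handles $k=1$ and the $\operatorname{ess\,sup}$ argument handles $k$ large, but the intermediate values of $k$ must be settled by a $k$-uniform analysis of the same $\epsilon$-expansion. The cleanest route I foresee is to show that the leading-in-$\epsilon$ coefficient of $\e_B[V^k\giventhat X_0 = 0] - \e_B[V^k]$ factors as a positive $k$-dependent factor times a $k$-independent factor that is negative precisely when $d\geq 4$, so that its sign is uniform in $k$; if such a clean factorisation fails, a slight readjustment of $\epsilon$ and $\delta$, or a higher-order $\epsilon$-expansion, should still give a pair $(K,L)$ valid for every $k \geq 1$ in dimension $d \geq 4$.
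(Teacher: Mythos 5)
Your reduction step (perturb $L$ by truncating the tip, expand to first order in the removed volume, and pass to the limiting half-ball inequality $\e_B[V^k\mid X_0=0]<\e_B[V^k]$) is in the same spirit as the paper, which packages exactly this as Lemma \ref{lem:randomSimplexCharacterization} via the Crofton-type derivative of Proposition \ref{pro:Crofton}: monotonicity for the $k$th moment fails as soon as some convex body $K$ and boundary point $x$ satisfy $\e(V_K^k)>\e\bigl((\vol\conv x,X_1,\dotsc,X_d)^k\bigr)$. The genuine gap is that you never prove this strict inequality for the half-ball, which is the entire content of the theorem for $d\geq 4$. Your justification --- ``the sign is controlled by a geometric factor that does not change when the integrand is raised to a positive integer power'' --- is not an argument, and as a principle it is false: for $d=2$ the inequality has the \emph{opposite} sign at $k=1$ (this is precisely why Theorem \ref{thm:simplexMonotonicity} gives monotonicity in the plane, via Lemma \ref{lem:plane} and Theorem \ref{thm:blaschke}), while your own ess-sup argument shows the sign you want for large $k$; so the sign really does depend on $k$, and the hoped-for factorization into a positive $k$-dependent factor times a $k$-independent factor whose sign depends only on $d$ cannot exist. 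Moreover, for $d\geq 4$, $k=1$ you defer to ``the half-ball computation in the proof of Theorem \ref{thm:simplexMonotonicity}'', but in the paper that case is itself deduced from the present theorem, so there is no independent $k=1$ computation to lean on; nothing in your proposal establishes the base case either.

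What is needed (and what the paper supplies) is a quantitative computation: with $K$ the half-ball, compare $K$ to the ball $L$ of equal volume by Blaschke--Groemer (Theorem \ref{thm:blaschkegroemer}) to get $\e(V_K^k)\geq 2^{-k}\e(V_{B_d}^k)$; observe by symmetry that $\e_{X_i\in K}\bigl((\vol\conv 0,X_1,\dotsc,X_d)^k\bigr)=\e_{X_i\in B_d}\bigl((\vol\conv 0,X_1,\dotsc,X_d)^k\bigr)$; insert the exact ball formulas (Theorems \ref{thm:ball} and \ref{thm:ballorigin}) to obtain the ratio bound \eqref{equ:moments}; and then use the Gamma-ratio estimate of Lemma \ref{lem:ratio} to show this bound is strictly less than $1$ for every $k\geq 1$ once $d\geq 4$, and Stirling to show it tends to $0$ as $k\to\infty$ for $d=2,3$. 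Your soft ess-sup argument is a reasonable alternative for the $d\in\{2,3\}$, large-$k$ case (and avoids Stirling), but without the quantitative half-ball estimate the $d\geq 4$ half of the theorem is simply unproven. Finally, note that your route also leaves open the uniformity of a single pair $(K,L)$ in $k$: the $O(v^2)$ error and the first-order gain in your expansion both depend on $k$, so a fixed choice of $\epsilon,\delta$ requires an explicit argument rather than the closing remark that ``a slight readjustment should still give a pair valid for every $k$''.
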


The intuition for our answer to Meckes's question came from our solution to another simpler but related question asked by Santosh Vempala: is the determinant of the covariance matrix of a convex body monotone under inclusion? (The covariance matrix $A(\cdot)$ is defined in Section \ref{sec:preliminaries}) Here we show:
\begin{theorem}[determinant of covariance]\label{thm:detcov}
If $d$ is $1$ or $2$ and $K, L$ are two $d$-dimensional convex bodies, then $K\subseteq L$ implies $\det A(K) \leq \det A(L)$. If $d \geq 3$, then there exist two convex bodies $K \subseteq L \subseteq \RR^d$ such that $ \det A(K) > \det A(L)$.
\end{theorem}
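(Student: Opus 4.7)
My plan is to prove the positive part ($d\le 2$) via a continuous-deformation argument reducing the problem to a one-dimensional moment inequality, and to prove the negative part ($d\ge 3$) by a counterexample of the same form as in Theorem~\ref{thm:simplexMonotonicity}.

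The case $d=1$ is immediate since $\det A(K)=(b-a)^{2}/12$ is just the variance of the uniform distribution on $[a,b]$. For $d=2$, I would connect $K$ and $L$ by a continuous one-parameter family of convex bodies $K_{t}\subseteq L$ with $K_{0}=K$ and $K_{1}=L$, obtained by gradually adding thin slivers. A standard first-order expansion shows that adding an infinitesimal volume $\eta$ concentrated near a point $y$ to a body with centroid $\mu$ and covariance $A$ changes $\log\det A$ by
\[
\frac{\eta}{\vol(K_{t})}\bigl[(y-\mu)^{T}A^{-1}(y-\mu)-d\bigr] + O(\eta^{2}).
\]
So monotonicity follows from the following containment lemma: \emph{every 2D convex body $K$ with centroid $\mu$ and covariance $A$ satisfies $(y-\mu)^{T}A^{-1}(y-\mu)\ge 2$ for all $y\notin K$}; equivalently, the ``Mahalanobis ellipse'' $\{y:(y-\mu)^{T}A^{-1}(y-\mu)\le 2\}$ is contained in $K$.

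The containment lemma is affinely invariant, so I can normalize to $\mu=0$, $A=I$, reducing it to showing $K\supseteq B_{\sqrt 2}$. If this failed, separation would give a unit vector $u$ with $h_{K}(u)<\sqrt 2$, and then for $X$ uniform in $K$ the random variable $\inner{X}{u}$ would have mean $0$, variance $1$, support bounded above by $h_{K}(u)<\sqrt 2$, and density concave on its support---the last being special to $d=2$, since the slice-length of a 2D convex body along any direction is itself concave. Thus the whole matter reduces to the following 1D claim, which I expect to be the main technical step: \emph{every probability density on $\RR$ that is concave on its support, with mean $0$ and variance $1$, satisfies $\max X\ge\sqrt 2$}, with equality exactly for the right-triangular density $f(x)=\tfrac{1}{9}(x+2\sqrt 2)$ on $[-2\sqrt 2,\sqrt 2]$. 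I plan to prove this by reducing to the two extreme-point families of the cone of concave densities---single-piece affine (trapezoidal/triangular) densities, and tent-shaped densities---and applying Lagrange multipliers in each family: the minimizer of $\max X$ under the two moment constraints, in both families, turns out to be the right-triangular density, and a standard approximation then extends the bound to all concave densities.

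The negative part for $d\ge 3$ exploits the failure of the containment lemma in higher dimensions: the isotropic $d$-simplex with one vertex at $(-d,0,\ldots,0)$ and opposite face in $\{x_{1}=1\}$ has its short face at distance only $\sqrt{(d+2)/d}<\sqrt d$ from its centroid, so appending any small convex cap past that face decreases $\det A$ by the derivative formula above. To match the rest of the paper, I would use the same half-ball-plus-cone construction as in Theorem~\ref{thm:simplexMonotonicity}: $L$ is a unit half-ball with a cone of tip $(-\eps,0,\ldots,0)$ attached to its flat face, and $K=L\cap\{x_{1}\ge -\eps+\delta\}$ is $L$ with the tip truncated. For $\eps,\delta$ sufficiently small and $d\ge 3$, the removed region $L\setminus K$ sits in a direction in which the Mahalanobis distance squared from the centroid of $K$ is strictly less than $d$, so the derivative formula yields $\det A(L)<\det A(K)$. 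The main technical work here will be the asymptotic expansion of $\det A(L)-\det A(K)$ in $\eps$ and $\delta$, which parallels the corresponding expansion carried out in the proof of Theorem~\ref{thm:simplexMonotonicity}.
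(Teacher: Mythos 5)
Your core mechanism is the same as the paper's: differentiate $\det A$ under a small perturbation of the body and reduce monotonicity to the statement that an isotropic convex body contains the centered ball of radius $\sqrt d$. Your first-order expansion for adding mass $\eta$ near $y$ is the pointwise version of the paper's slice formula (Proposition \ref{pro:derivativeAK}), and your Mahalanobis-ellipse containment lemma is exactly the criterion of Lemma \ref{lem:detCharacterization}. Where you diverge is in how the containment facts are established: the paper simply invokes the known theorem that an isotropic body contains $\sqrt{(d+2)/d}\,B_d$ and that this is best possible (Milman--Pajor, Sonnevend, KLS), which settles both directions at once since $\sqrt{(d+2)/d}\geq\sqrt d$ exactly when $d\leq 2$; you propose instead to re-prove the $d=2$ case via a one-dimensional variational claim about concave densities, and to build explicit counterexamples for $d\geq 3$. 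Your 1D claim is true (it is precisely the $d=2$ case of the cited inradius theorem, with the right-triangular density, i.e.\ the triangle, extremal), and a self-contained planar proof would be a nice by-product; but as written it is the main technical step of your positive part and the extreme-point/Lagrange-multiplier sketch is not yet a proof --- either carry that out carefully or cite the known result. Likewise, your ``gradually adding thin slivers'' path should be made concrete (the paper approximates $K$ by a polytope and pushes in facet halfspaces one at a time) so that the sign of the derivative can actually be integrated along the path.

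Two points in the negative part need repair. First, ``appending \emph{any} small convex cap past that face'' is too strong: only points of the facet near its centroid are at Mahalanobis distance about $\sqrt{(d+2)/d}$; near the facet's corners the distance approaches the circumradius $\sqrt{d(d+2)}>\sqrt d$, so the cap must be attached near the foot of the perpendicular from the centroid, and one still has to convert the first-order sign into an honest inequality for small parameters (the paper does this in the ``only if'' direction of Lemma \ref{lem:detCharacterization} via a supporting hyperplane at an extreme point, a one-sided derivative, and continuity). Second, the claim that the half-ball-plus-cone analysis ``parallels the corresponding expansion carried out in the proof of Theorem \ref{thm:simplexMonotonicity}'' does not cover $d=3$: that argument (via Theorem \ref{thm:higherMoments}) works for $d\geq 4$, and for $d=3$ only for large moments, so it gives nothing for $\det A$ (which corresponds to $k=2$ by Lemma \ref{lem:detVSsimplex}). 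For $d=3$ you must check directly that the center of the flat face of the half-ball is at Mahalanobis distance less than $\sqrt 3$ from the centroid; this does hold (centroid height $3/8$, second moment $1/5$, squared distance $(9/64)/(1/5-9/64)=45/19<3$), but it is a separate computation, not inherited from Theorem \ref{thm:simplexMonotonicity} --- or one can use the isotropic simplex, whose facet distance $\sqrt{(d+2)/d}<\sqrt d$ for $d\geq 3$ is exactly the tightness statement the paper cites.
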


The question by Vempala was also motivated by the slicing conjecture. As we show in the Appendix, the following weaker version of it is equivalent to the slicing conjecture: There exists a universal constant $c_1>0$ such that for any pair of convex bodies $K, M \subseteq \RR^d$ we have
\[
K \subseteq M \implies \det A(K) \leq c_1^{d} \det A(M).
\]

The high level idea of the proof of Theorem \ref{thm:detcov} is the following: To understand the monotonicity it is enough to compute and understand the derivative of $\det A(\cdot)$ as one intersects the convex body with a moving halfspace (Proposition \ref{pro:derivativeAK}). We then find conditions under which this derivative has always the right sign (Lemma \ref{lem:detCharacterization} and the proof of Theorem \ref{thm:detcov}). In the proof of Theorem \ref{thm:detcov} it is shown that understanding such a derivative is enough.

The following formula explains the connection between the determinant of the covariance matrix and the expected volume of a random simplex:
\begin{lemma}\label{lem:detVSsimplex}
Let $K$ be a $d$-dimensional convex body. Let $X_1, \dotsc, X_d$ be random in $K$. Let $\mu(K):=\e X_1$ be the centroid of $K$. Then
\begin{align}
\det A(K) &= d! \e_{X_i \in K} \bigl((\vol \conv \mu(K), X_1, \dotsc, X_d)^2\bigr) \nonumber \\
    &= \frac{d!}{d+1} \e (V_K^2),\label{equ:detVSsimplex}
\end{align}
\end{lemma}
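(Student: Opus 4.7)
The plan is to prove both equalities as two instances of the same linear-algebraic identity: for $Y_1, \dotsc, Y_n$ i.i.d.\ random vectors in $\RR^n$ and $M = [Y_1 \mid \dotsb \mid Y_n]$, one has $\e[(\det M)^2] = n!\,\det(\e[YY^T])$. I would derive this by expanding $(\det M)^2$ with the Leibniz formula, taking expectations (using independence across columns), and reindexing via $\pi = \sigma\tau^{-1}$ to collapse one sum into $n!$ and recognize the other as $\det(\e[YY^T])$.

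For the first equality I would apply the identity with $n=d$ and $Y_i = X_i - \mu(K)$, which are i.i.d.\ with $\e[YY^T] = A(K)$. Since the simplex with vertex set $\{\mu(K), X_1, \dotsc, X_d\}$ has volume $\tfrac{1}{d!}\lvert\det[Y_1 \mid \dotsb \mid Y_d]\rvert$, the identity gives $\e[(\det[Y_1 \mid \dotsb \mid Y_d])^2] = d!\,\det A(K)$, which rearranges to $\det A(K) = d!\,\e\bigl[(\vol \conv \mu(K), X_1, \dotsc, X_d)^2\bigr]$.

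For the second equality I would use the standard augmentation trick: form $\tilde X_i = (1, X_i)^T \in \RR^{d+1}$, so that
\[
V_K = \frac{1}{d!}\Bigl\lvert\det[\tilde X_0 \mid \dotsb \mid \tilde X_d]\Bigr\rvert
\]
(adding the all-ones row turns the difference matrix $[X_1 - X_0 \mid \dotsb \mid X_d - X_0]$ into the augmented matrix, with the same determinant). Now apply the identity with $n=d+1$ and $Y_i = \tilde X_i$; the relevant second-moment matrix is
\[
B := \e[\tilde X \tilde X^T] = \begin{pmatrix} 1 & \mu(K)^T \\ \mu(K) & \e[XX^T] \end{pmatrix},
\]
whose Schur complement identity yields $\det B = \det\bigl(\e[XX^T] - \mu(K)\mu(K)^T\bigr) = \det A(K)$. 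The identity then gives $(d!)^2\,\e(V_K^2) = (d+1)!\,\det A(K)$, which rearranges to $\det A(K) = \tfrac{d!}{d+1}\e(V_K^2)$, completing the chain of equalities.

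The proof is essentially routine linear algebra; the only mild subtlety is the augmentation step that reduces the ``translated'' simplex with $d+1$ random vertices to a single determinant of an $(d+1)\times(d+1)$ matrix, so that the same Leibniz-expansion identity can be applied uniformly in both halves.
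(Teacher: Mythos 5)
Your proposal is correct and follows essentially the same route as the paper: the key identity $\e\bigl[(\det[Y_1 \mid \dotsb \mid Y_n])^2\bigr] = n!\,\det(\e[YY^T])$ for i.i.d.\ columns (which the paper cites as known, Equation \eqref{equ:blaschke}), applied once to the centered points and once to the $(d+1)$-dimensional augmented vectors. The only cosmetic difference is that you evaluate the augmented second-moment determinant via a Schur complement, whereas the paper invokes translation invariance to reduce to $\mu(K)=0$; both settle that block determinant equally well.
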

The first equality is known and easy to verify, see e.g. \cite[Proposition 1.3.3]{Giannopoulos}, the second equality is a slight extension, see Section \ref{sec:proofs} for a proof.

In view of Equation \ref{equ:detVSsimplex}, one would think that if a pair of convex bodies is an example that the monotonicity of $\det A(\cdot)$ does not hold, then it is also such an example for the functional
%%\[
%%K \mapsto \e_{X_i \in K} \bigl((\vol \conv X_0, X_1, \dotsc, X_d)^2\bigr)
%%\]
%or even
\[
K \mapsto \e (V_K).
\]
Given these similarities, it should be no surprise that techniques and examples similar to those for $\det A(\cdot)$ also work for the expected volume of a random simplex and higher moments.

For the proof of Theorem \ref{thm:simplexMonotonicity} we use a special case of Crofton's theorem\footnote{Sometimes called Crofton's differential equation. It gives an expression for the derivative of a symmetric function of random points from a domain as the domain is perturbed.} \cite[Chapter 5]{Solomon}, \cite[Chapter 2]{KendallMoran}. Our special case is Proposition \ref{pro:Crofton}, which we prove here for completeness, partly because the proof of this version is elementary and because originally the formula was an informal statement instead of a theorem. Crofton's theorem has been formalized at least twice, once with differential geometry \cite{Badderley} and another time with conditional probability \cite{EisenbergSullivan}. It is likely that using either of these two versions one could prove Theorem \ref{thm:simplexMonotonicity} in a simpler but less elementary way.

\section{Preliminaries}\label{sec:preliminaries}

Let $K \subseteq \RR^d$ be a convex body. Let $X_0, \dotsc, X_d$ be random points in $K$.
Let $V_K$ denote the random variable $V_K = \vol (\conv (X_0, \dotsc, X_d))$.
%Let $P_n^K$ be the random polytope $\conv(X_1, \dotsc, X_n)$.
Let $\vol(\cdot)$ be the $d$-dimensional volume function. Let $\vol_k(\cdot)$ be the $k$-dimensional volume function.

Let $X$ be random in $K$. Let $\mu(K)$ denote the centroid of $K$: $\mu(K) = \e (X)$. Let $A(K)$ be the covariance matrix of $K$:
\[
A(K) = \e \Bigl(\bigl(X-\mu(K)\bigr) \bigl(X-\mu(K)\bigr)^T\Bigr).
\]
We say that $K$ is isotropic iff $\mu(K) = 0$ and $A(K)$ is the identity matrix. It is easy to see that any convex body can be made isotropic by applying an affine transformation to it.

Given $K$ and a hyperplane $H$, the \emph{Steiner symmetrization} of $K$ with respect to $H$ is the convex body that results from the following process: For every line $L$ orthogonal to $H$ such that the segment $L \cap K$ is non-empty, shift the segment along $L$ so that its midpoint lies in $H$. Similarly, given $K$ and a halfspace $H$, \emph{Blaschke's shaking (Sch\"uttelung)} of $K$ with respect to $H$ is the convex body that results from the following process: For every line $L$ orthogonal to $H$ such that the segment $L \cap K$ is non-empty, shift the segment along $L$ so that one endpoint lies on the boundary of $H$ while the whole segment stays inside $H$ (See \cite{Pfiefer} for a discussion). If a given hyperplane $H$ does not intersect the interior of $K$, then we define Blaschke's shaking of $K$ with respect to $H$ as the shaking defined before with respect to the halfspace containing $K$ and having $H$ as boundary.

Whenever we have a function defined on an interval $[a,b]$ and we write the derivative of $f$ at $a$ we (implicitly) mean the one-sided derivative.

We will need the following known results:

\begin{theorem}[Blaschke, {\protect \cite[Note 1 for Section 8.2.3]{schneider}}]\label{thm:blaschke}
For any $2$-dimensional convex body $K$:
\[
\frac{\e( V_K)}{\vol(K)} \leq \frac{1}{12},
\]
with equality iff $K$ is a triangle.
\end{theorem}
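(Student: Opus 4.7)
The plan is to combine affine invariance of the ratio $\e(V_K)/\vol(K)$ with a monotonicity property under Blaschke's shaking. Affine invariance is immediate: for any invertible affine map $T$, $\vol(TK) = \abs{\det T}\vol(K)$ and $\e(V_{TK}) = \abs{\det T}\e(V_K)$, so the ratio is preserved. Combined with Hausdorff compactness of a normalized affine class (for example, bodies with the unit disk as John ellipsoid) and continuity of $K \mapsto \e(V_K)/\vol(K)$, this yields the existence of an extremizer $K^*$, and it suffices to identify it as a triangle.

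The key technical step, and the main obstacle, is shaking monotonicity: for any halfspace $H$, shaking $K$ against $H$ preserves $\vol(K)$ and weakly increases $\e(V_K)$. Writing
\[
2\,\e(V_K)\, \vol(K)^3 = \iiint \abs{\det(x_1 - x_0,\, x_2 - x_0)} \, dx_0\, dx_1\, dx_2,
\]
decompose $x_i = (t_i, y_i)$ along and perpendicular to $\partial H$. For fixed $(t_0, t_1, t_2)$, the integrand is the absolute value of a linear form $L$ in $(y_0, y_1, y_2)$. If $[f(t_i), g(t_i)]$ is the $i$-th fiber of $K$, substituting $y_i = f(t_i) + z_i$ brings the original inner integral to $I(c)$ with $c = L(f(t))$, while the shaken inner integral is $I(0)$, where $I(c) := \int \abs{L(z) + c} \, dz$ is taken over the common region $z_i \in [0, g(t_i) - f(t_i)]$. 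Since $I$ is convex with minimum at $c^* = -L((g-f)/2)$, the monotonicity reduces to the inequality $\abs{c^*} \geq \abs{L(f(t)) - c^*} = \abs{L((f+g)/2)}$. Assuming WLOG $t_0 < t_1 < t_2$ and using convexity of $f$ and concavity of $g$ (the upper and lower boundaries of the $2$-dimensional body), one obtains $L(f) \geq 0 \geq L(g)$, which yields the inequality; strictness holds unless $L(f) = 0$ or $L(g) = 0$ for the given triple.

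Since $K^*$ is extremal, shaking against every halfspace must preserve $\e(V_{K^*})$, forcing the equality case of the above inequality for every triple $(t_0, t_1, t_2)$ in every direction. A direction-by-direction analysis (using that ``$L(f) = 0$ on all triples in an interval'' means $f$ is affine there) constrains $K^*$ to be a triangle up to affine equivalence. An elementary direct computation on the standard $2$-simplex then yields $\e(V_T)/\vol(T) = 1/12$, completing both the bound and the equality characterization.
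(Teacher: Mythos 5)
The paper does not actually prove this statement: Theorem \ref{thm:blaschke} is imported from Schneider's book as Blaschke's classical result and is used as a black box in the proof of Theorem \ref{thm:simplexMonotonicity}. What you have written is essentially a reconstruction of Blaschke's original Sch\"uttelung argument, and its computational core is correct: for fixed $t_0<t_1<t_2$ the determinant is the linear form $L(y)=(t_2-t_1)y_0-(t_2-t_0)y_1+(t_1-t_0)y_2$, the inner integral for $K$ is $I(L(f))$ and for the shaken body $I(0)$, $I$ is convex and symmetric about $c^*=-L((g-f)/2)$, and convexity of $f$ and concavity of $g$ give $L(f)\geq 0\geq L(g)$, hence $\abs{c^*}\geq\abs{L((f+g)/2)}$ and $I(0)\geq I(L(f))$; so shaking preserves area and weakly increases $\e(V_K)$, and a maximizer must realize the equality case in every direction. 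Notably, this convexity-plus-reflection trick is the same device the paper itself uses in Lemma \ref{lem:shaking} for the pinned-vertex functional, so your route is very much in the spirit of the paper's toolkit even though the paper only cites the theorem.

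The weak part is the end-game, which is load-bearing for the bound itself (your route obtains $1/12$ only after identifying the maximizer as a triangle), yet is compressed into one sentence. Two genuine arguments are missing. First, the equality condition you get is an a.e.\ disjunction, ``$L(f)=0$ or $L(g)=0$'' per triple, equivalently ``$f$ affine on $[t_0,t_2]$ or $g$ affine on $[t_0,t_2]$'' per pair; to use it you must upgrade it to ``for each direction, $f$ is affine on the whole projection interval or $g$ is.'' This does follow (if $f$ is non-affine on some $[s_0,s_2]$ it is non-affine on every larger interval, forcing $g$ to be affine on a.e.\ pair containing $[s_0,s_2]$; these overlapping affine pieces exhaust the interval), but your parenthetical hint covers only the easy half of this step. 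Second, you must show that a planar convex body for which, in every direction, one of the two open boundary arcs is a single segment is necessarily a triangle; this is true (for a polygon with at least four edges one finds a direction splitting the edge normals two--two, producing interior kinks on both arcs, and a non-polygon has two extreme points that land in the interiors of opposite arcs for a suitable direction), but it is precisely the step that delivers the conclusion and it is not argued. Minor further debts: existence of a maximizer needs continuity of $K\mapsto \e(V_K)/\vol(K)$ in the Hausdorff metric on a normalized affine class, and your strictness claim uses that $I$ is strictly increasing away from $c^*$, valid for a.e.\ triple but worth a line. None of these are wrong turns, and all are completable, but as written the final third is a sketch rather than a proof.
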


\begin{theorem}[Blaschke-Groemer, {\protect \cite[Theorem 8.6.3]{schneider}}]\label{thm:blaschkegroemer}
Let $k \geq 1$ be an integer. Among all $d$-dimensional convex bodies,
\[
K \mapsto \frac{\e( V_K^k)}{\vol(K)^k}
\]
is minimized iff $K$ is an ellipsoid.
\end{theorem}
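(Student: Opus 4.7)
The plan is to prove this classical inequality by Steiner symmetrization combined with the affine invariance of the functional. First I would observe that $K \mapsto \e(V_K^k)/\vol(K)^k$ is invariant under invertible affine maps: if $T$ is such a map then $V_{TK}$ has the same distribution as $\abs{\det T}\, V_K$ by pushing forward the uniform measure, while $\vol(TK)^k = \abs{\det T}^k \vol(K)^k$. So it suffices to minimize $\e(V_K^k)$ over $K$ of a fixed volume and to show that Euclidean balls are the only minimizers; by affine invariance the minimizers of the ratio will then be exactly the ellipsoids.

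The key step is that Steiner symmetrization does not increase $\e(V_K^k)$. Let $H = \{x_d = 0\}$ after a rotation, and write $K^*$ for the symmetrization of $K$ with respect to $H$. Writing each sample point as $X_i = (\bar X_i, T_i)$ and expanding the determinant defining $V_K$ along its last row gives
\[
V_K = \frac{1}{d!} \abs{\sum_{i=0}^d c_i(\bar X_0, \dotsc, \bar X_d)\, T_i},
\]
where the $c_i$ depend only on the projections and satisfy $\sum_i c_i = 0$. Because Steiner symmetrization preserves the slice lengths fiberwise, the joint law of $(\bar X_0, \dotsc, \bar X_d)$ is identical under $K$ and under $K^*$. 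Conditioning on the projections and writing $T_i = S_i + m_i$, where $S_i$ is uniform on the symmetric interval of length equal to the slice length and $m_i$ is the midpoint of the slice of $K$ over $\bar X_i$, one obtains $\sum_i c_i T_i = Y + \mu$ with $Y := \sum_i c_i S_i$ symmetric about $0$ and $\mu := \sum_i c_i m_i$ deterministic given the projections. Since $t \mapsto \abs{t}^k$ is convex and even, $\mu \mapsto \e(\abs{Y + \mu}^k \giventhat \bar X_0, \dotsc, \bar X_d)$ is a convex even function of $\mu$, and hence attains its minimum at $\mu = 0$, which is the value realized by $K^*$. Integrating over the projections yields $\e(V_{K^*}^k) \leq \e(V_K^k)$.

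To conclude, I would iterate: by a classical theorem of Gross (as used in Schneider's book) there is a sequence of Steiner symmetrizations of $K$ converging in the Hausdorff metric to the Euclidean ball $B$ of the same volume. Since $K \mapsto \e(V_K^k)$ is continuous under Hausdorff convergence of convex bodies of positive volume (the uniform measures converge weakly and the integrand is continuous and bounded on any fixed bounded region), this gives $\e(V_K^k) \geq \e(V_B^k)$, and affine invariance promotes the statement to ellipsoids.

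The hard part will be the equality characterization. I would need strict inequality in the Steiner step whenever $K$ is not symmetric with respect to a translate of $H$, which should follow from strict convexity of $\mu \mapsto \e\abs{Y + \mu}^k$ once one verifies that $Y$ is non-degenerate on a set of projections of positive measure. Then I would invoke the classical fact that a convex body which coincides with a translate of its Steiner symmetrization in every hyperplane direction must be an ellipsoid. Pinning down this last rigidity statement cleanly, and handling the borderline degenerate configurations where $Y$ is concentrated at a point, is the most delicate part of the argument.
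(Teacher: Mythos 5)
This theorem is not proved in the paper at all: it is imported from Schneider \cite[Theorem 8.6.3]{schneider} as a known result of Blaschke and Groemer, so your proposal has to stand on its own. The inequality half of your argument is sound and is in fact the classical symmetrization route: affine invariance of $\e(V_K^k)/\vol(K)^k$, identical joint law of the projections $\bar X_i$ for $K$ and its Steiner symmetral, the expansion $\sum_i c_i T_i$ with $\sum_i c_i=0$, evenness and convexity of $\mu \mapsto \e\bigl(\abs{Y+\mu}^k \giventhat \bar X_0,\dotsc,\bar X_d\bigr)$, and then a converging sequence of symmetrizations plus continuity. That part I would accept.

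The genuine gap is the equality characterization, and it is not just unfinished --- the two statements you plan to rely on are false. First, equality in the Steiner step does \emph{not} force $K$ to be symmetric about a translate of $H$: the functional is invariant under volume-preserving affine maps, so a shear (in the symmetrization direction) of a body symmetric about $H$ yields equality while having no hyperplane of symmetry parallel to $H$. What equality actually forces, via strict convexity on the non-degenerate set of projections, is $\mu=\sum_i c_i m(\bar X_i)=0$ almost surely; since $\sum_i c_i m(\bar x_i)$ is, up to sign, the determinant of the matrix with rows $(\bar x_i, m(\bar x_i),1)$, this says the midpoint function $m$ is affine, i.e.\ the midpoints of the chords of $K$ parallel to the symmetrization direction lie in a hyperplane (generally not orthogonal to that direction). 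Second, the rigidity statement you invoke --- ``a convex body coinciding with a translate of its Steiner symmetral in every direction is an ellipsoid'' --- is wrong: such a body has a hyperplane of symmetry orthogonal to every direction, and the reflections in those hyperplanes generate the full orthogonal group about the centroid, so the body is a Euclidean ball. In particular a generic ellipsoid does not satisfy that hypothesis (its symmetral in a non-axis direction is a sheared, not translated, copy), so your strategy could not even certify that all ellipsoids are minimizers, which they must be by affine invariance. The correct finish is: a minimizer $K$ must give equality in the symmetrization step for every direction (its symmetral is again a minimizer), hence the midpoints of every family of parallel chords of $K$ lie in a hyperplane, and the classical Brunn characterization of ellipsoids by this midpoint-hyperplane property concludes that $K$ is an ellipsoid; this is essentially Groemer's argument behind the cited theorem.
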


Let $B_d$ be the $d$-dimensional unit ball, let $S_{d-1}$ be the boundary of $B_d$. Let $\kappa_d:= \vol(B_d) = \pi^{d/2}/\Gamma(1+\frac{d}{2})$, $\omega_d:= \vol_{d-1}(S_{d-1}) = d \kappa_d$.
\begin{theorem}[random simplex in ball, {\protect \cite[Theorem 8.2.3]{schneider}}]\label{thm:ball}
For any integer $k \geq 1$,
\[
\e \bigl( V_{B_d}^k \bigr) = \frac{1}{(d!)^k} \left(\frac{\kappa_{d+k}}{\kappa_d}\right)^{d+1} \frac{\kappa_{d(d + k + 1) }}{\kappa_{(d+1)(d + k)}} \frac{\omega_1 \dotsm \omega_k}{\omega_{d+1} \dotsm \omega_{d+k}}.
\]
\end{theorem}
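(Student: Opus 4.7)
The plan is the classical Kingman-Miles approach: prove the formula by a base-height decomposition combined with the affine Blaschke-Petkantschin formula, reducing the computation to a recursion on the dimension $d$. I would isolate one vertex, say $X_0$, and write
\[
V_{B_d} = \tfrac{1}{d}\, h\, V',
\]
where $V'$ is the $(d-1)$-volume of $\conv(X_1,\dotsc,X_d)$ and $h$ is the perpendicular distance from $X_0$ to the affine hyperplane $E$ spanned by $X_1,\dotsc,X_d$. Then
\[
\e(V_{B_d}^k) = \frac{1}{d^k \kappa_d^{d+1}} \int_{B_d^d} (V')^k \left(\int_{B_d} h(x_0,E)^k \, dx_0\right) dx_1 \dotsm dx_d.
\]
By rotational invariance of $B_d$ the inner integral depends only on the signed distance $s$ from the origin to $E$; slicing $B_d$ parallel to $E$ expresses it as $\kappa_{d-1}\int_{-1}^{1}|t-s|^k(1-t^2)^{(d-1)/2}\,dt$.

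Next I would apply the affine Blaschke-Petkantschin formula for $d$ points in $\RR^d$ to the outer integral. This reparametrizes $(x_1,\dotsc,x_d)$ as (hyperplane $E$)$\times$(positions inside $E$) and introduces a Jacobian $(d-1)!\,V'$, thus turning $(V')^k$ into $(V')^{k+1}$. The invariant hyperplane measure factors as $\tfrac12\,du\,ds$ with $u\in S^{d-1}$ and $s\in\RR$; by rotational symmetry the $u$-integration yields $\omega_d/2$, and for $|s|\leq 1$ the slice $E\cap B_d$ is a $(d-1)$-ball of radius $\rho=\sqrt{1-s^2}$, so by scaling
\[
\int_{(E\cap B_d)^d}(V')^{k+1}\,dx_1\dotsm dx_d \;=\; \kappa_{d-1}^{\,d}\,\rho^{(d-1)(d+k+1)}\,\e(V_{B_{d-1}}^{\,k+1}).
\]
Combining these pieces yields a recursion $\e(V_{B_d}^k)=C(d,k)\,\e(V_{B_{d-1}}^{\,k+1})$, where $C(d,k)$ is an explicit product of $\kappa_d,\kappa_{d-1},\omega_d$ and the double Beta-type integral of $|t-s|^k(1-t^2)^{(d-1)/2}(1-s^2)^{(d-1)(d+k+1)/2}$ over $[-1,1]^2$.

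The base case is $d=1$: $V_{B_1}=|X_0-X_1|$ for $X_0,X_1$ uniform on $[-1,1]$, giving directly $\e(V_{B_1}^m)=2^{m+1}/((m+1)(m+2))$. Iterating the recursion $d-1$ times reduces $\e(V_{B_d}^k)$ to a product of $C$'s multiplied by $\e(V_{B_1}^{k+d-1})$; each double integral $\int\int|t-s|^k(1-t^2)^{a}(1-s^2)^{b}$ evaluates to a product of $\Gamma$ values by a Dirichlet-type identity (or by power-series expansion of $|t-s|^k$ and term-by-term Beta integration), and the resulting telescoping product of $\Gamma$-values collapses, via the duplication formula and $\omega_j=j\kappa_j$, into the displayed factors $\kappa_{d(d+k+1)}/\kappa_{(d+1)(d+k)}$ and $\omega_1\dotsm\omega_k/(\omega_{d+1}\dotsm\omega_{d+k})$. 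The main obstacle is precisely this final bookkeeping, which is mechanical but error-prone; a cleaner alternative, once the closed form is available as a conjecture, is to verify the recursion directly by substituting the closed form on both sides and checking the resulting $\Gamma$-identity, which sidesteps the need to solve the recursion from scratch.
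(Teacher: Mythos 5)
The paper does not actually prove this statement: it is Kingman's classical formula, quoted from Schneider--Weil \cite[Theorem 8.2.3]{schneider}, so your proposal must be judged against the literature rather than against an argument in the text. Your skeleton is a legitimate classical route and is internally consistent: the base--height identity $V_{B_d}=\tfrac1d hV'$, the affine Blaschke--Petkantschin reparametrization of $(x_1,\dotsc,x_d)$ by the spanned hyperplane (Jacobian proportional to $V'$, exponent $d-(d-1)=1$ correctly turning $(V')^k$ into $(V')^{k+1}$), the scaling exponent $(d-1)(d+k+1)$ for the slice, and the resulting recursion $\e(V_{B_d}^k)=C(d,k)\,\e(V_{B_{d-1}}^{k+1})$ with base case $d=1$ are all sound; carrying it out for $d=2$, $k=1$ does reproduce $\e(V_{B_2})=35/(48\pi)$.

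The genuine gap is the step you dismiss as bookkeeping: the evaluation of $\int_{-1}^1\int_{-1}^1\abs{t-s}^k(1-t^2)^{(d-1)/2}(1-s^2)^{(d-1)(d+k+1)/2}\,dt\,ds$. With unequal exponents in $t$ and $s$ this is not a Dirichlet/Selberg-type integral, so no standard identity applies as stated; and the alternative of ``power-series expansion of $\abs{t-s}^k$ and term-by-term Beta integration'' fails for odd $k$, since $\abs{t-s}^k$ is not a polynomial and a binomial expansion is valid only after splitting the square into $\{t>s\}$ and $\{t<s\}$, where the integrals no longer factor into Beta integrals. Your proposed shortcut --- substituting the conjectured closed form into both sides of the recursion --- does not sidestep this, because $C(d,k)$ itself contains the unevaluated double integral: that check would only tell you what the integral must equal, not prove it. To close the gap you must either evaluate the kernel integral uniformly in $d$ and $k$ (feasible but nontrivial: compute the inner $t$-integral explicitly, which for half-integer exponent is of the form $P(s)+Q(s)\sqrt{1-s^2}+R(s)\arcsin s$ with polynomial $P,Q,R$, then reduce the $s$-integral to Beta integrals by integration by parts, and induct), or replace this part of the argument by the standard device used in the literature (Kingman's original proof, and the beta-density calculus of Miles and of Schneider--Weil): represent uniform points of $B_d$ as orthogonal projections of uniform points of a higher-dimensional ball or sphere, so that the height moments become single Beta integrals and the two-variable kernel $\abs{t-s}^k$ never appears.
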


\begin{theorem}[simplex with origin in ball, {\protect \cite[Theorem 8.2.2]{schneider}}]\label{thm:ballorigin}
For any integer $k \geq 1$,
\[
\e_{X_i \in B_d}\bigl( ( \vol \conv 0, X_1, \dotsc, X_d)^k \bigr) = \frac{1}{(d!)^k} \left(\frac{\kappa_{d+k}}{\kappa_d}\right)^{d} \frac{\omega_1 \dotsm \omega_k}{\omega_{d+1} \dotsm \omega_{d+k}}.
\]
\end{theorem}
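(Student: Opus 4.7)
The plan is to reduce the $k$-th moment to a $d$-fold integral of an absolute determinant over $B_d$, separate radial and angular parts via polar coordinates, and then iterate a recursion on the sphere. Since $\vol\conv(0,X_1,\dotsc,X_d) = \frac{1}{d!}\abs{\det(X_1,\dotsc,X_d)}$ with the $X_i$ viewed as column vectors, I first rewrite the expectation as
\[
\frac{1}{(d!)^k \kappa_d^d}\int_{B_d^d}\abs{\det(X_1,\dotsc,X_d)}^k\,dX_1\cdots dX_d.
\]
Switching to polar coordinates $X_i = r_i\theta_i$ with $r_i\in[0,1]$ and $\theta_i\in S_{d-1}$ (Jacobian $r_i^{d-1}$), the determinant factors as $r_1\cdots r_d\cdot\abs{\det(\theta_1,\dotsc,\theta_d)}$, the $d$ radial integrals separate and each equals $\int_0^1 r^{d+k-1}\,dr = 1/(d+k)$, and what remains is the purely spherical integral
\[
J_d(k) := \int_{(S_{d-1})^d}\abs{\det(\theta_1,\dotsc,\theta_d)}^k\,d\theta_1\cdots d\theta_d.
\]

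The core step is a recursion for $J_d(k)$. By rotational invariance of $\abs{\det}$ I may fix $\theta_d=e_d$, gaining the factor $\omega_d$ from integrating $\theta_d$ over $S_{d-1}$. With this choice the last-column expansion reduces $\abs{\det(\theta_1,\dotsc,\theta_{d-1},e_d)}$ to $\abs{\det(y_1,\dotsc,y_{d-1})}$, where I split $\theta_i=(y_i,z_i)\in\RR^{d-1}\times\RR$. Parametrizing further as $\theta_i=(\sqrt{1-z_i^2}\,\phi_i,\,z_i)$ with $\phi_i\in S_{d-2}$ and $z_i\in[-1,1]$ pulls a factor $\prod_i\sqrt{1-z_i^2}$ out of the determinant, and the surface element on $S_{d-1}$ is $d\theta_i=(1-z_i^2)^{(d-3)/2}\,dz_i\,d\phi_i$. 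The $z_i$ integrals separate cleanly from the $\phi_i$ integrals, giving
\[
J_d(k) = \omega_d\left(\int_{-1}^1(1-z^2)^{(d+k-3)/2}\,dz\right)^{d-1} J_{d-1}(k),
\]
and a standard beta-function evaluation rewrites the inner factor as $\omega_{d+k}/\omega_{d+k-1}$.

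Iterating from the base case $J_1(k)=2=\omega_1$ yields
\[
J_d(k) = \prod_{j=1}^d \omega_j \cdot \prod_{j=2}^d \left(\frac{\omega_{j+k}}{\omega_{j+k-1}}\right)^{j-1},
\]
and a routine count of exponents collapses the double product to $J_d(k) = \omega_{d+k}^{d-1}\omega_1\dotsm\omega_k/(\omega_{d+1}\dotsm\omega_{d+k-1})$. Combining with the prefactor $1/((d!)^k\kappa_d^d(d+k)^d)$ and using $\omega_{d+k}=(d+k)\kappa_{d+k}$ to regroup reproduces the claimed formula. The main obstacle is this last combinatorial bookkeeping: one must carefully track which $\omega_n$ cancels with which in the telescoping double product, so that the tower of $(\omega_{j+k}/\omega_{j+k-1})^{j-1}$ factors collapses into exactly $\omega_{d+k}^{d-1}/(\omega_{d+1}\dotsm\omega_{d+k-1})$. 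The remaining ingredients (polar decomposition, spherical recursion, beta integral) are all standard, and the base case $d=1$ needs a separate one-line verification since the spherical parameterization degenerates there.
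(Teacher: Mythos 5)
Your derivation is correct, but note that the paper does not prove this statement at all: Theorem \ref{thm:ballorigin} is quoted as a known result with a citation to Schneider--Weil (Theorem 8.2.2 there), so there is no internal proof to compare against. What you have written is essentially the classical computation of the moments of a random determinant (going back to Miles and reproduced in the cited book): polar decomposition separating the radial factor $(d+k)^{-d}$, reduction to the spherical integral $J_d(k)$, and the recursion $J_d(k)=\omega_d\,(\omega_{d+k}/\omega_{d+k-1})^{d-1}J_{d-1}(k)$ obtained by rotating one point to $e_d$ and splitting the sphere measure as $(1-z^2)^{(d-3)/2}\,dz\,d\phi$. I checked the bookkeeping you flag as the main risk: the telescoping indeed gives $J_d(k)=\omega_{d+k}^{d-1}\,\omega_1\dotsm\omega_k/(\omega_{d+1}\dotsm\omega_{d+k-1})$, and together with $\kappa_d^{-d}(d+k)^{-d}$ and $\omega_{d+k}=(d+k)\kappa_{d+k}$ this reproduces the stated right-hand side (and is consistent with the value $4/(9\pi)$ used in Lemma \ref{lem:plane}). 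Two small points to make explicit if you write this up: the step ``fix $\theta_d=e_d$ and gain a factor $\omega_d$'' should be justified by Fubini plus the rotation invariance of the inner integral over $\theta_1,\dotsc,\theta_{d-1}$ (you cannot literally fix a variable of integration), and the base case $d=1$ uses the counting measure of total mass $\omega_1=2$ on $S_0$, as you note. With those remarks your argument is a complete, self-contained proof of the cited formula.
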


\begin{lemma}[Busemann random simplex inequality,{\protect \cite[Theorem 8.6.1]{schneider}}]\label{lem:minimalWithOrigin}
Among all $d$-dimensional convex bodies,
\[
K \mapsto \frac{\e_{X_i \in K}\bigl( \vol \conv (0, X_1, \dotsc, X_i)\bigr)}{\vol(K)}
\]
is minimized iff $K$ is an ellipsoid centered at the origin. The minimum value is
\[
    \frac{1}{d!} \left(\frac{\kappa_{d+1}^d}{\kappa_d^{d+1}}\right) \frac{2}{ \omega_{d+1}}.
\]
\end{lemma}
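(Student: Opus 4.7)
The plan is to prove this classical Busemann random simplex inequality by reducing to a spherical integral, symmetrizing to the origin-symmetric case, applying Busemann's intersection body theorem to identify centered balls as the minimizers among those, and evaluating the resulting constant at the ball. First I would restrict to $K$ with $0 \in \operatorname{int}(K)$ (otherwise a suitable translate gives a smaller ratio) and pass to polar coordinates $X_i = r_i \theta_i$ with $r_i \in [0,\rho_K(\theta_i)]$, $\theta_i \in S^{d-1}$, where $\rho_K$ is the radial function of $K$ about the origin. Since $\vol\conv(0,X_1,\ldots,X_d) = |\det(X_1,\ldots,X_d)|/d!$ is multilinear in the radii, the radial integrals factor as $\int_0^{\rho_K(\theta_i)} r_i^d\,dr_i = \rho_K(\theta_i)^{d+1}/(d+1)$, and combined with $\vol(K) = \tfrac{1}{d}\int_{S^{d-1}}\rho_K^d\,d\sigma$ this rewrites the target ratio as
\[
\frac{\e\vol\conv(0,X_1,\ldots,X_d)}{\vol(K)} = \frac{1}{d!(d+1)^d\,\vol(K)^{d+1}}\int_{(S^{d-1})^d}|\det(\theta_1,\ldots,\theta_d)|\prod_{i=1}^d \rho_K(\theta_i)^{d+1}\,d\sigma(\theta_i),
\]
an entirely radial optimization.

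Next I would reduce to the origin-symmetric case. Since $|\det(\theta_1,\ldots,\theta_d)|$ is invariant under the $2^d$ sign flips $\theta_i\mapsto \pm\theta_i$, averaging the integrand over these reflections and then applying convexity of $t\mapsto t^{(d+1)/d}$ to $u_i = \rho_K(\pm\theta_i)^d$ shows that the symmetrized radial function $\tilde\rho(\theta)^d = \tfrac{1}{2}(\rho_K(\theta)^d + \rho_K(-\theta)^d)$ defines a star body $\tilde K$ with the same volume and a weakly smaller spherical integral, strictly smaller unless $K$ is already origin-symmetric. (A Brunn--Minkowski argument on parallel chords, or a convex-hull approximation, handles convexity of $\tilde K$.) For origin-symmetric convex $K$ I would then invoke Busemann's intersection theorem---the star body with radial function $\vol_{d-1}(K\cap\theta^\perp)$ is itself convex---together with an iteration of affine-invariant Steiner symmetrizations about hyperplanes through the origin, which forces the minimum over origin-symmetric convex bodies to be attained only by centered balls. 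Affine invariance of the ratio under $GL(d)$ then extends equality to all centered ellipsoids.

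The minimum value is obtained by evaluating at $K=B_d$ via Theorem~\ref{thm:ballorigin} with $k=1$: substituting $\omega_1 = 2$ and $\vol(B_d) = \kappa_d$ yields $\frac{1}{d!}\bigl(\kappa_{d+1}^d/\kappa_d^{d+1}\bigr)(2/\omega_{d+1})$, matching the claim. The hardest step is the application of Busemann's intersection theorem and its use to reduce from origin-symmetric bodies to centered balls; the theorem itself rests on Brunn's concavity principle applied to parallel sections, and converting it into a strict lower bound for the spherical integral with the correct equality case is the technical heart of the argument.
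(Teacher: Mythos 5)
This lemma is not proved in the paper at all --- it is quoted from \cite[Theorem 8.6.1]{schneider} --- so your sketch has to stand on its own, and it has two genuine gaps. First, your symmetrization produces the star body $\tilde K$ with $\rho_{\tilde K}(\theta)^d=\tfrac12\bigl(\rho_K(\theta)^d+\rho_K(-\theta)^d\bigr)$, and the parenthetical claim that convexity of $\tilde K$ is a routine matter is false: $\tilde K$ need not be convex. Take $d=2$ and $K$ the triangle with vertices $(1,0)$, $(-1,\eps)$, $(-1,-\eps)$; for directions $(\sin t,\cos t)$ near vertical one has $\rho_K=(\eps/2)/(\cos t+(\eps/2)\sin t)$ and $\rho_K(-\,\cdot)=(\eps/2)/(\cos t-(\eps/2)\sin t)$, and a direct computation shows the upper boundary of $\tilde K$ is $y=\eps/2+(3\eps/4)x^2+O(x^4)$, which curves away from the body, so $\tilde K$ is not convex. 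This is not a technicality: every subsequent step you invoke (Busemann's intersection theorem, Steiner symmetrization) is a statement about convex bodies, and replacing $\tilde K$ by its convex hull enlarges the numerator and the volume simultaneously, so no comparison survives. Note also that the $d$-th power mean is the only $p$-mean that preserves volume, so you cannot switch to the harmonic mean of the gauge functions, which is the mean that does preserve convexity; a rescue would be to prove the inequality directly for origin-symmetric star bodies, but that is then a new theorem you would have to establish, not something you cite.

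Second, the step that carries all the weight --- that among origin-symmetric convex bodies the minimum is attained only at centered balls --- is asserted rather than argued. Busemann's intersection theorem (convexity of the body with radial function $\vol_{d-1}(K\cap\theta^\perp)$) is never connected to the spherical integral you are minimizing, and it is not apparent what role it could play here; and if iterated Steiner symmetrization through hyperplanes containing the origin is the intended engine, the missing lemma is precisely that your functional does not increase under such a symmetrization, together with the equality analysis needed for the ``only ellipsoids'' conclusion. That monotonicity, proved by a chord-pair convexity argument in the spirit of the paper's Lemma \ref{lem:shaking} (Groemer/Busemann), is the technical heart of the classical proof and is entirely absent from your sketch, even though you correctly flag this step as the hard one. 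The routine parts are fine: the polar-coordinate identity, the sign-flip averaging via the power-mean inequality $M_{d+1}\ge M_d$ with volume preserved, the $GL(d)$-invariance extending the ball to all centered ellipsoids, and the evaluation of the constant at $B_d$ via Theorem \ref{thm:ballorigin}; the initial reduction to $0\in\operatorname{int}K$ by translation is plausible but also left unproved.
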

%\begin{lemma}[Busemann random simplex inequality,{\protect \cite[Theorem 8.6.1]{schneider}}]\label{lem:minimalWithOrigin}
%Let $k \geq 1$ be an integer. Among all $d$-dimensional convex bodies,
%\[
%K \mapsto \frac{\e_{X_i \in K}\bigl( (\vol \conv 0, X_1, \dotsc, X_i)^k\bigr)}{\vol(K)^k}
%\]
%is minimized iff $K$ is an ellipsoid centered at the origin. The minimum value is
%\[
%    \frac{1}{(d!)^k} \frac{\kappa_{d+k}^d}{\kappa_d^{d+k}} \frac{\omega_1 \dotsm \omega_k}{\omega_{d+1} \dotsm \omega_{d+k}}.
%\]
%\end{lemma}

\begin{lemma}[ball volume ratio, {\protect \cite[p. 455]{borgwardt}}]\label{lem:ratio}
\[
\sqrt{\frac{d}{2\pi}} \leq \frac{\kappa_{d-1}}{\kappa_{d}} \leq \sqrt{\frac{d+1}{2 \pi}}.
\]
\end{lemma}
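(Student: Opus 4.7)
The plan is to reduce both bounds to a single Wallis-type identity and a monotonicity observation. Slicing the ball orthogonally to a diameter gives
\[
\kappa_d = \kappa_{d-1} \int_{-1}^1 (1-t^2)^{(d-1)/2}\, dt,
\]
so if I set $I_n := \int_{-1}^1 (1-t^2)^{n/2}\, dt$, then $\kappa_{d-1}/\kappa_d = 1/I_{d-1}$, and the lemma is equivalent to $\sqrt{2\pi/(d+1)} \le I_{d-1} \le \sqrt{2\pi/d}$.

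First I would establish the product identity $I_n I_{n-1} = 2\pi/(n+1)$. The substitution $t=\sin\theta$ turns $I_n$ into $\int_{-\pi/2}^{\pi/2} \cos^{n+1}\theta\, d\theta$, so $I_n I_{n-1}$ is the classical Wallis product $W_{n+1} W_n$, whose value $2\pi/(n+1)$ follows from the standard integration-by-parts recursion $W_m = \tfrac{m-1}{m} W_{m-2}$ applied to the telescoping product $W_{n+1} W_n$.

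The second ingredient is the monotonicity $I_n \ge I_{n+1}$, which is immediate because for $t\in(-1,1)$ the integrand $(1-t^2)^{n/2}$ is decreasing in $n$. Taking this back to the ratio $\kappa_{d-1}/\kappa_d = 1/I_{d-1}$, we obtain the two chained inequalities
\[
\Bigl(\frac{\kappa_{d-1}}{\kappa_d}\Bigr)^2 = \frac{1}{I_{d-1}^2} \le \frac{1}{I_{d-1} I_{d-2}} = \frac{d}{2\pi}\cdot\frac{1}{1}\cdot\text{(wrong direction check)}
\]
so more carefully: from $I_{d-2} \ge I_{d-1} \ge I_d$ one deduces $1/I_{d-1}^2 \le 1/(I_{d-1} I_d) = (d+1)/(2\pi)$ and $1/I_{d-1}^2 \ge 1/(I_{d-2} I_{d-1}) = d/(2\pi)$, and taking square roots yields both stated inequalities.

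The only delicate step is the Wallis identity $I_n I_{n-1} = 2\pi/(n+1)$; everything else is a one-line monotonicity observation together with the slicing formula for $\kappa_d$. Since this identity is completely standard, I expect no real obstacle, and the whole argument fits in a short paragraph. (One could alternatively phrase it as log-convexity of $\Gamma$, since $\kappa_{d-1} \kappa_{d+1} \le \kappa_d^2$ is equivalent to $\Gamma((d+1)/2)\Gamma((d+3)/2) \ge \Gamma(d/2+1)^2$, but the Wallis route avoids invoking any auxiliary property of $\Gamma$.)
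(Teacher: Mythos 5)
Your proof is correct, but it takes a genuinely different route from the paper. The paper deduces the bounds in two lines from the log-convexity of $\Gamma$, via the Gautschi-type inequality $(x+\alpha-1)^\alpha \leq \Gamma(x+\alpha)/\Gamma(x) \leq x^\alpha$ for $\alpha \in (0,1)$, $x \geq 1$, applied to $\kappa_{d-1}/\kappa_d = \Gamma(1+\tfrac{d}{2})/\bigl(\sqrt{\pi}\,\Gamma(\tfrac{d+1}{2})\bigr)$. You instead slice the ball to get $\kappa_{d-1}/\kappa_d = 1/I_{d-1}$ with $I_n = \int_{-1}^1 (1-t^2)^{n/2}\,dt$, prove the Wallis product identity $I_n I_{n-1} = 2\pi/(n+1)$ by the integration-by-parts recursion, and interlace using the obvious monotonicity $I_{d-2} \geq I_{d-1} \geq I_d$; the final chain $d/(2\pi) = 1/(I_{d-2}I_{d-1}) \leq 1/I_{d-1}^2 \leq 1/(I_{d-1}I_d) = (d+1)/(2\pi)$ is exactly right. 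The two arguments are close cousins under the hood: your product identity is the two-step recursion $\kappa_d = \tfrac{2\pi}{d}\kappa_{d-2}$, and the monotonicity of $I_n$ is the log-concavity $\kappa_{d-1}\kappa_{d+1} \leq \kappa_d^2$, i.e.\ the same convexity of $\log\Gamma$ in disguise, as your closing parenthetical acknowledges. What your route buys is that it is self-contained and elementary, needing no property of $\Gamma$ beyond what the Wallis recursion supplies; what the paper's route buys is brevity, since the Gautschi inequality gives both bounds simultaneously in one application. One cosmetic point: the display containing ``(wrong direction check)'' is a drafting artifact and should be deleted, since the corrected sentence that follows it is the actual (and valid) argument; also state explicitly that you need $d \geq 1$ so that $I_{d-2}$ is defined (for $d=1$ it is the convergent integral $I_{-1} = \pi$, and the identity and monotonicity still hold).
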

\begin{proof}
The convexity of $\log \Gamma(x)$ implies
\[
(x+\alpha-1)^\alpha \leq \frac{\Gamma(x+\alpha)}{\Gamma(x)} \leq x^\alpha \qquad \text{for $\alpha \in (0,1)$ and $x \geq 1$}.
\]
The desired inequality follows.
\end{proof}

\section{Proofs}\label{sec:proofs}

\subsection{Proof of Theorem \ref{thm:detcov}}
We will first prove Theorem \ref{thm:detcov}. Most of the work is in proving the following dimension-dependent condition:
\begin{lemma}\label{lem:detCharacterization}
Monotonicity under inclusion of $K \mapsto \det A(K)$ holds for some dimension $d$ iff for any isotropic convex body $K \subseteq \RR^d $ we have $\sqrt{d} B_d \subseteq K$.
\end{lemma}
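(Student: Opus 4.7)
The strategy reduces both directions to a single infinitesimal criterion. A direct computation from the definition $A(K)=\frac{1}{\vol(K)}\int_K(y-\mu(K))(y-\mu(K))^T\,dy$ combined with Jacobi's identity $d\log\det A=\tr(A^{-1}\,dA)$ shows that adding an infinitesimal mass $d\eps$ to $K$ at a point $x$ gives
\[
\frac{d}{d\eps}\log\det A(K_\eps)\Big|_{\eps=0^+}\;=\;\frac{1}{\vol(K)}\bigl[(x-\mu(K))^TA(K)^{-1}(x-\mu(K))-d\bigr].
\]
The bracketed quantity is affine-invariant; in the isotropic position of $K$ it becomes $\|x\|^2-d$, so adding mass outside $K$ at $x$ raises $\det A$ exactly when $\|x\|\geq\sqrt{d}$.

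\textbf{Necessity.} Suppose monotonicity holds and let $K$ be isotropic. If $\sqrt{d}B_d\not\subseteq K$, pick $z\in\sqrt{d}B_d\setminus K$; since $\mu(K)=0$ lies in the interior of $K$, the segment $[0,z]$ meets $\bdry K$ at a point $x_0$ with $\|x_0\|\leq\|z\|<\sqrt{d}$. Take an outward normal $v$ of a supporting hyperplane of $K$ at $x_0$ and set $K_\eps:=\conv(K\cup\{x_0+\eps v\})$, which is convex and strictly contains $K$ for $\eps>0$. As the added region concentrates near $x_0$, the infinitesimal criterion yields a first-order change of $\det A$ equal to a strictly negative multiple of the added volume, so $\det A(K_\eps)<\det A(K)$ for small $\eps>0$, contradicting monotonicity.

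\textbf{Sufficiency.} Assume every isotropic convex body $M\subseteq\RR^d$ contains $\sqrt{d}B_d$. The key substep is that $\det A(M\cap H)\leq\det A(M)$ for every convex body $M$ and halfspace $H$. Parametrize $H$ by a translating family $\{H(t)\}_{t\in[0,1]}$ with $M\cap H(0)=M$ and $M\cap H(1)=M\cap H$, and set $Q(t):=M\cap H(t)$. Proposition \ref{pro:derivativeAK} expresses $\frac{d}{dt}\det A(Q(t))$ as the integral over the cross-section $Q(t)\cap\bdry H(t)$ of the criterion integrand $(y-\mu(Q(t)))^TA(Q(t))^{-1}(y-\mu(Q(t)))-d$, up to a positive factor encoding the translation speed. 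After normalizing $Q(t)$ to isotropic position this integrand becomes $\|y\|^2-d$; the cross-section lies on $\bdry Q(t)$, and every boundary point satisfies $\|y\|\geq\sqrt{d}$ by hypothesis, so the integrand is non-negative and $\det A(Q(t))$ is monotone in the correct direction. Integrating gives the substep. To compare general $K\subseteq L$, write $K=L\cap\bigcap_{i\geq1}H_i$ for a countable family of supporting halfspaces of $K$ (using separability of the sphere of outward normals), set $P_n:=L\cap H_1\cap\cdots\cap H_n$, and iterate the substep to get $\det A(P_n)\leq\det A(L)$. Since $P_n\searrow K$ in Hausdorff distance, continuity of the moments of the uniform distribution under Hausdorff convergence yields $\det A(K)\leq\det A(L)$.

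\textbf{Main obstacle.} The principal technical input is Proposition \ref{pro:derivativeAK}, which provides the derivative formula for halfspace perturbations; once it is available, everything reduces to Jacobi's identity, affine normalization, and a standard halfspace approximation. A minor issue in the necessity direction is that the volume of $\conv(K\cup\{x_0+\eps v\})\setminus K$ is of order $\eps^{(d+1)/2}$ rather than $\eps$ (depending on the local geometry of $\bdry K$ at $x_0$), so the derivative should be taken with respect to the added volume, or equivalently the single added point replaced by a small cap transverse to $v$.
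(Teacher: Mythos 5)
Your sufficiency direction is essentially the paper's ``if'' argument: Proposition \ref{pro:derivativeAK} applied to each cut body $Q(t)$ (invoking affine invariance of the sign of the derivative), with the cross-section lying on $\bdry Q(t)$ and the hypothesis forcing the integrand to have a fixed sign in isotropic position. The paper reaches the same conclusion by contradiction, via a polytopal approximation of $K$ and a point where the derivative has the wrong sign, whereas you integrate the derivative along each cut and exhaust $K$ by countably many halfspaces; that repackaging is fine (just watch the orientation: with your parametrization the derivative is a positive multiple of $d-\e_{X \in S}\bigl(\norms{X}\bigr)$, not of its negative, which is what makes $\det A$ decrease under cutting).

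The necessity direction, however, has a genuine gap. The claim that the added region $C_\eps=\conv(K\cup\{x_0+\eps v\})\setminus K$ ``concentrates near $x_0$'' is false in general: every point of the face of $K$ exposed by $v$ (indeed, of every face exposed by a normal in the normal cone at $x_0$) is a limit of points of $C_\eps$. For a polytope the added mass spreads as a thin sliver over entire facets adjacent to $x_0$; for example, if $K$ is a square, $x_0$ a vertex and $v$ the outward diagonal, then $C_\eps$ consists of two triangles whose bases are full edges of $K$. Consequently the first-order change of $\det A(K_\eps)$ is governed by the average of $\norms{y}-d$ against a limit measure spread over those faces, which can be nonnegative even though $\norms{x_0}<d$. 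Your proposed remedy (differentiating with respect to added volume, or replacing the point by a small cap transverse to $v$) does not address this, because convexity forces any added set to be of this convex-hull type and hence to spread in the same way; the issue is where the mass goes, not the parametrization speed. The paper avoids the problem by cutting rather than adding: it first perturbs so that $x$ becomes an extreme point admitting a supporting hyperplane meeting $K$ only at $x$ (the footnote construction $\conv(K\cup\{\alpha x\})$, re-isotropized), then applies Proposition \ref{pro:derivativeAK} to $L_t=K\cap H_t$; there the cross-sections $S_t$ do shrink to $x$ as $t\to a^{+}$, so $\e_{X\in S_t}\bigl(\norms{X}\bigr)<d$ on an interval, the derivative is positive there, and one gets $L_{a+\eps}\subseteq K$ with $\det A(L_{a+\eps})>\det A(K)$. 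To repair your argument you need either this switch to cutting (together with the extreme-point perturbation), or a separate reduction to smooth, strictly convex $K$ that preserves isotropy and the existence of a boundary point of norm less than $\sqrt{d}$.
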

\begin{proof}
For the ``if'' part, suppose for a contradiction that $K, L \subseteq \RR^d$ are two convex bodies for which the monotonicity does not hold: $K \subseteq L$ but
\[
\det A(K) > \det A(L).
\]
By the continuity of $\det A(\cdot)$ and the density of polytopes we can assume without loss of generality that $K$ is a polytope satisfying the same properties. Let $m$ be the number of facets of $K$. Label the facets of $K$ arbitrarily with labels $1, \dotsc, m$. Let $F_i \subseteq \RR^d$, $i =0, \dotsc, m$ be the following non-increasing sequence of convex bodies: $F_0 = L$, $F_m = K$, $F_i = F_{i-1} \cap H_i$, where $H_i$ is the unique halfspace containing $K$ and containing facet $i$ of $K$ in its boundary. Then there exists $i$ such that $F_i, F_{i-1}$ is also a counterexample to the monotonicity. Let $v$ be the unit outer normal to facet $i$ of $K$. Consider the path from $F_{i-1}$ to $F_i$ induced by pushing $H_i$ in, formally, the path is given by, for $t \in [a,b]$:
\[
F(t) = F_{i-1} \cap H(t)
\]
where $H(t) = \{ x \in \RR^d \suchthat \inner{v}{x} \leq t\}$ and $a = \sup_{x \in F_{i-1}} \inner{v}{x}$, $b  = \sup_{x \in F_{i}} \inner{v}{x}$. The function $t \mapsto \det A(F(t))$ is continuous in $[a,b]$ and differentiable in $(a,b)$ and the lack of monotonicity implies that there exists $\bar t \in (a,b)$ such that its derivative is positive at $\bar t$. Now, even though this derivative is not invariant under affine transformations, its sign \emph{is} invariant. Thus, we can assume without loss of generality that $F_{\bar t}$ is in isotropic position and Proposition \ref{pro:derivativeAK} implies
\[
\e_{X \in S_{\bar t}} \bigl(\norms{X}\bigr) < d
\]
where $S_{\bar t} = F_{\bar t} \cap \bdry H(\bar t)$. In particular, there exists $x \in S_{\bar t}$ such that $\norm{x} < d$, which implies $\sqrt{d} B_d \nsubseteq F_{\bar t}$.

For the ``only if'' part, suppose that there is an isotropic convex body $K \subseteq \RR^d$ and a point $x \in \bdry K$ such that $\norm{x} < \sqrt{d}$. By continuity and an approximation argument, we can replace $K$ and $x$ without loss of generality so that $x$ is an extreme point of $K$, while still satisfying $\norm{x} < \sqrt{d}$ and isotropy.\footnote{For example, add a point $x_\alpha=\alpha x$ for $\alpha > 1 $ and take the convex hull between $K$ and $x_\alpha$ to get a convex body $K_\alpha$. We have that $x_\alpha$ is an extreme point of $K_\alpha$. For some $\alpha$ sufficiently close to $1$ and $T_\alpha = A(K_\alpha)^{-1/2}$, we have $T_\alpha K_\alpha$ isotropic and $\norm{ T_\alpha x_\alpha} < d$.} Let $v \in \RR^d \setminus \{0\}$ and $a < 0$ determine a halfspace $H = \{x \in \RR^n \suchthat \inner{v}{x} \geq a \}$ containing $K$ whose boundary intersects $K$ only at $x$. Let $H_t  = \{x \in \RR^n \suchthat \inner{v}{x} \geq t \}$. Let $L_t$ be the convex body $K \cap H_t$. Then by continuity, Proposition \ref{pro:derivativeAK} and the fact that $\norm{x} < d$, we have that there exists $\eps>0$ such that for all $t \in (a, a + \eps)$:
\[
\frac{d}{dt} \det A(L_t) > 0.
\]
This implies $\det A(K) < \det A(L_{a+\eps})$ while $L_{a+\eps} \subseteq K$.
\end{proof}

\begin{proof}[Proof of Theorem \ref{thm:detcov}.]
Immediate from Lemma \ref{lem:detCharacterization} and the fact that any $d$-dimensional isotropic convex body contains the ball of radius $\sqrt{(d+2)/d}$ centered at the origin and this is best possible \cite{MilmanPajor, sonnevend1989applications},\cite[Theorem 4.1]{KLS}.
\end{proof}

\subsection{Proof of Theorems \ref{thm:simplexMonotonicity} and \ref{thm:higherMoments}}

We will now prove Theorems \ref{thm:simplexMonotonicity} and \ref{thm:higherMoments}. We begin with a dimension-dependent condition similar to that of Lemma \ref{lem:detCharacterization}.
\begin{lemma}\label{lem:randomSimplexCharacterization}
For a given integer $k \geq 1$ and dimension $d$, monotonicity under inclusion of
\[
K \mapsto \e ( V_K^k )
\]
holds when $K$ ranges over $d$-dimensional convex bodies iff for any convex body $K \subseteq \RR^d $ and any $x \in \bdry K$ and $X_1, \dotsc, X_d$ random in $K$ we have
\begin{equation}\label{equ:randomSimplexCharacterization}
\e (V_K^k ) \leq \e \bigl( ( \vol \conv x, X_1, \dotsc, X_d )^k \bigr).
\end{equation}
\end{lemma}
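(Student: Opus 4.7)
My plan is to mirror closely the proof of Lemma \ref{lem:detCharacterization}, replacing $\det A$ by $\e(V^k)$ and Proposition \ref{pro:derivativeAK} by Crofton's formula (Proposition \ref{pro:Crofton}) for the key derivative computation.

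For the ``if'' direction I argue by contrapositive: given a counterexample $K \subseteq L$ with $\e(V_K^k) > \e(V_L^k)$, I aim to produce a convex body $M$ and a boundary point $x \in \bdry M$ that violate (\ref{equ:randomSimplexCharacterization}). By continuity of $K \mapsto \e(V_K^k)$ and density of polytopes in the Hausdorff metric (approximate $K$ from the inside by polytopes), I may assume $K$ is a polytope. Then I build the telescoping chain $F_0 = L \supseteq F_1 \supseteq \dotsb \supseteq F_m = K$ with $F_i = F_{i-1} \cap H_i$, where $H_i$ is the halfspace containing $K$ whose boundary carries the $i$-th facet, and extract an index with $\e(V_{F_i}^k) > \e(V_{F_{i-1}}^k)$. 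Letting $v$ be the outer normal to facet $i$ of $K$, I set $F(t) = F_{i-1} \cap \{x \suchthat \inner{v}{x} \leq t\}$ for $t$ between $b = \sup_{x \in F_i}\inner{v}{x}$ and $a = \sup_{x \in F_{i-1}}\inner{v}{x}$, so that $F(b) = F_i$ and $F(a) = F_{i-1}$. Since $\e(V_{F(b)}^k) > \e(V_{F(a)}^k)$ with $b \leq a$, the mean value theorem furnishes $\bar t \in (b, a)$ at which $t \mapsto \e(V_{F(t)}^k)$ has strictly negative derivative.

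The central step is to compute this derivative via Crofton's formula. Writing $\e(V_{F(t)}^k)$ as a ratio of $\int_{F(t)^{d+1}} V^k$ to $\vol(F(t))^{d+1}$, using $\frac{d}{dt}\vol(F(t)) = \vol_{d-1}(S_t)$, and applying Proposition \ref{pro:Crofton} to the numerator yields
\[
\frac{d}{dt}\e(V_{F(t)}^k) = \frac{d+1}{\vol(F(t))} \int_{S_t} \Bigl[\e_{X_i \in F(t)}\bigl((\vol \conv x, X_1, \dotsc, X_d)^k\bigr) - \e(V_{F(t)}^k)\Bigr] d\sigma(x),
\]
where $S_t = F(t) \cap \bdry\{y \suchthat \inner{v}{y} \leq t\}$. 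Negativity of this derivative at $\bar t$ forces some $x \in S_{\bar t} \subseteq \bdry F(\bar t)$ with $\e_{X_i \in F(\bar t)}\bigl((\vol \conv x, X_1, \dotsc, X_d)^k\bigr) < \e(V_{F(\bar t)}^k)$, which is the desired violation of (\ref{equ:randomSimplexCharacterization}) at $M := F(\bar t)$.

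For the ``only if'' direction, suppose (\ref{equ:randomSimplexCharacterization}) fails at some convex body $K$ and some $x_0 \in \bdry K$. By a continuity/approximation argument in the spirit of the footnote of Lemma \ref{lem:detCharacterization} (e.g., replace $K$ by $\conv(K \cup \{\alpha x_0\})$ for $\alpha > 1$ close to $1$), I may assume $x_0$ is an extreme point of $K$ while preserving the strict violation. Pick a supporting halfspace $H = \{x \suchthat \inner{v}{x} \leq a\}$ with $K \cap \bdry H = \{x_0\}$ and set $L_t = K \cap \{x \suchthat \inner{v}{x} \leq t\}$; then $L_a = K$ and the slice $S_t$ concentrates near $x_0$ as $t \uparrow a$. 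Applying the derivative identity above and using continuity of $y \mapsto \e_{X_i \in K}\bigl((\vol \conv y, X_1, \dotsc, X_d)^k\bigr)$ near $x_0$ together with the strict inequality $\e_{X_i \in K}\bigl((\vol \conv x_0, X_1, \dotsc, X_d)^k\bigr) < \e(V_K^k)$, I conclude that $\frac{d}{dt}\e(V_{L_t}^k) < 0$ for $t$ slightly below $a$; hence $\e(V_{L_{a-\eps}}^k) > \e(V_K^k)$ with $L_{a-\eps} \subseteq K$ for small $\eps > 0$, contradicting monotonicity. The principal obstacle is extracting the Crofton-based derivative identity with the correct factor $d+1$ and sign conventions from Proposition \ref{pro:Crofton}; the continuity and extreme-point approximation used in the ``only if'' step are routine and can be justified by dominated convergence.
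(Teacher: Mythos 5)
Your proposal is correct and follows essentially the same route as the paper, which proves this lemma by repeating the argument of Lemma \ref{lem:detCharacterization} with Proposition \ref{pro:derivativeAK} replaced by Proposition \ref{pro:Crofton} (with $q=d+1$, $f(x_0,\dotsc,x_d)=(\vol\conv x_0,\dotsc,x_d)^k$, and no isotropy); your Crofton-based derivative identity, including the factor $d+1$ and the sign for the growing-body parametrization, matches what that proposition yields.
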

\begin{proof}
The proof essentially the same as the proof of Lemma \ref{lem:detCharacterization}, with Proposition \ref{pro:derivativeAK} replaced by Proposition \ref{pro:Crofton}, with $q = d+1$,
\[
f(x_0, \dotsc, x_d) = (\vol \conv x_0, \dotsc, x_d)^k,
\]
and without using isotropy.
\end{proof}

Then we verify the dimension-dependent condition for $k=1$ in $\RR^2$ by means of the following lemma (which gives a lower bound to the right hand side of \eqref{equ:randomSimplexCharacterization}) and Blaschke's maximality of the triangle for Sylvester's problem, Theorem \ref{thm:blaschke} (which gives an upper bound to the left hand side).
\begin{lemma}\label{lem:plane}
Let $K \subseteq \RR^2$ be a convex body and let $x \in \bdry K$. Then
\begin{equation}\label{equ:plane}
\frac{\e_{X_1, X_2 \in K} (\vol \conv x, X_1, X_2)}{\vol K} \geq \frac{8}{9 \pi^2}.
\end{equation}
%with equality iff $K$ is a half of an ellipse centered at $x$.
\end{lemma}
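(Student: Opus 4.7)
The plan is to reflect $K$ through the boundary point $x$ to produce a centrally symmetric body $\bar K$ with $\vol \bar K = 2 \vol K$; the required inequality then falls out of Busemann's random simplex inequality (Lemma \ref{lem:minimalWithOrigin}) applied to $\bar K$. After translating and rotating, assume $x = 0$ and $K \subseteq H^+ := \{(u,v) \in \RR^2 : v \geq 0\}$, with the $u$-axis being a supporting line of $K$ at the origin. Then $-K \subseteq H^-$ and $K \cap (-K)$ is contained in the $u$-axis, hence has two-dimensional measure zero. Set $\bar K := K \cup (-K)$; it is a centrally symmetric star body with respect to $0$ with $\vol \bar K = 2 \vol K$.

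The key identity is
\[
\e_{X_1, X_2 \in K}[\vol \conv(0, X_1, X_2)] = \e_{Y_1, Y_2 \in \bar K}[\vol \conv(0, Y_1, Y_2)].
\]
To verify it, observe that a uniform $Y_i \in \bar K$ has the same law as $\varepsilon_i X_i$, where $\varepsilon_i \in \{-1, +1\}$ is a uniform random sign and $X_i$ is uniform in $K$, with $\varepsilon_1, \varepsilon_2, X_1, X_2$ mutually independent. In $\RR^2$ we have $\vol \conv(0, Y_1, Y_2) = \tfrac{1}{2}|Y_1 \times Y_2| = \tfrac{1}{2}|\varepsilon_1 \varepsilon_2|\,|X_1 \times X_2| = \vol \conv(0, X_1, X_2)$ pointwise, so the two expectations are equal. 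Applying Busemann's inequality to $\bar K$ then gives
\[
\e_{\bar K}[\vol \conv(0, Y_1, Y_2)] \geq \frac{4}{9\pi^2}\vol \bar K = \frac{8}{9\pi^2}\vol K,
\]
and combining with the identity yields \eqref{equ:plane}.

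The main obstacle is that $\bar K$ is in general not convex, whereas Lemma \ref{lem:minimalWithOrigin} is stated for convex bodies. The resolution is that in polar coordinates from the origin,
\[
\frac{\e_L[\vol \conv(0, Y_1, Y_2)]}{\vol L} = \frac{4}{9} \cdot \frac{\iint_{[0,2\pi]^2} |\sin(\theta_1 - \theta_2)|\,\rho_L^3(\theta_1)\,\rho_L^3(\theta_2)\,d\theta_1\,d\theta_2}{\bigl(\int_0^{2\pi} \rho_L^2(\theta)\, d\theta\bigr)^3}
\]
for any body $L$ star-shaped with respect to the origin, so the inequality is really a functional inequality on the nonnegative radial function $\rho_L$ whose proof via radial symmetrisation does not use convexity. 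I expect the equality case of \eqref{equ:plane} to be the half-ellipse with $x$ at the midpoint of its diameter, which affinely normalises to the half-disk and matches the sharp constant $8/(9\pi^2)$.
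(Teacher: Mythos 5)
Your reduction step is correct and, in fact, mirrors what the paper does at the end of its own argument: placing $x$ at the origin with $K$ in a half-plane, the pointwise identity $\vol\conv(0,\varepsilon_1 X_1,\varepsilon_2 X_2)=\vol\conv(0,X_1,X_2)$ does give $\e_{K}\vol\conv(0,X_1,X_2)=\e_{\bar K}\vol\conv(0,Y_1,Y_2)$ with $\bar K=K\cup(-K)$, $\vol\bar K=2\vol K$, and the constant $\frac{8}{9\pi^2}$ and the predicted equality case (half-ellipse, i.e.\ $\bar K$ an ellipse) are the right ones. The gap is in the step you yourself flag: Lemma \ref{lem:minimalWithOrigin} is stated (and proved in the cited source) for \emph{convex} bodies, while $\bar K$ is in general not convex, and your resolution is only an assertion. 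Writing the functional in polar coordinates does show that the statement you need is a statement about nonnegative radial functions, but the claim that the constant function minimizes that functional over \emph{all} radial functions is precisely a star-body strengthening of Busemann's inequality; it does not follow formally from the convex case, and your remark that the known proof ``via radial symmetrisation does not use convexity'' does not match the proof behind the citation used here (Schneider's proof runs through Steiner symmetrization of convex bodies and the Groemer-type segment lemma). Your local/heuristic confidence is not a substitute: as stated, the key inequality for $\bar K$ is unproven.

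This is exactly the difficulty the paper's proof is organized around: the Steiner symmetrization with respect to the line through $x$ orthogonal to a supporting line, followed by Blaschke's shaking with respect to the supporting line (Lemma \ref{lem:shaking}), serve to replace $(K,x)$ by a minimizing pair for which the reflection of the body through $x$ \emph{is} a centrally symmetric convex body, so that Lemma \ref{lem:minimalWithOrigin} applies legitimately; your proposal attempts to skip these two steps and therefore needs the stronger star-body input. The approach can likely be repaired, since versions of Busemann's random simplex inequality for star-shaped or bounded Borel sets do exist in the literature (they underlie, e.g., the Busemann intersection inequality for Borel sets; see Busemann's original paper, Pfiefer's work, or Gardner's \emph{Geometric Tomography}), and with such a reference your argument would be shorter than the paper's and would even localize the equality case. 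But you must either cite such a statement precisely (including its equality conditions) or prove the radial-function inequality you wrote down; as it stands, the central step is missing.
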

\begin{proof}
The continuity and affine-invariance of the lhs. of \eqref{equ:plane} and a standard compactness argument imply existence of a minimum $K$ and $x$.

To show the inequality, we will show with a series of symmetrizations that a half of a ball centered at $x$ minimizes the lhs. The intuition needed to understand the effect of Steiner symmetrization and Blaschke's shaking (see Section \ref{sec:preliminaries} for a brief review) is the following \cite[Section 3]{Pfiefer}:
If one picks three points at random from three vertical segments in the plane that are allowed to move vertically, picking one point from each segment, then the expected area of the convex hull of those 3 points is a strictly increasing function of the area of the triangle formed by the midpoints of the segments. For example, this implies that Steiner symmetrization decreases the expected area of a random triangle: The area of the triangle of the midpoints is zero when the midpoints lie on a common line.

Here is the sequence of symmetrizations:
\begin{enumerate}
\item Steiner symmetrization: Let $L$ be any supporting line of $K$ through $x$. Let $L^\perp$ be a line orthogonal to $L$ through $x$. Let $\bar K$ be the Steiner symmetrization of $K$ with respect to $L^\perp$. If $\bar K \neq K$, then $\bar K$ has a strictly smaller value than $K$ of the lhs. of \eqref{equ:plane}, see \cite[follows from Lemma 4]{Groemer} or \cite{Busemann}.

\item Blaschke's shaking (Sch\"{u}ttelung) with respect to $L$: for every chord of $\bar K$ perpendicular to $L$, shift it in the direction orthogonal to $L$ so that its endpoint that is nearest to $L$ lies on $L$. The union of the shifted chords is a convex body that we denote $\dbar K$. Lemma \ref{lem:shaking} shows that this operation can only decrease the value of the lhs. of \eqref{equ:plane}.
    %If $\dbar K \neq \bar K$, then $\dbar K$ has a strictly smaller value than $K$ of the lhs. of \eqref{equ:plane}.
    %So we know now that any minimum pair $K, x$ must be invariant under the previous step and this step. Denote by $\mathcal{S}$ the family of pairs satisfying such invariance.
    So we know now that the set of pairs that are invariant under the previous step and this step contains a minimizer. Denote by $\mathcal{S}$ the family of pairs satisfying such invariance.

%\item Let $\dbar K_s$ be the symmetrization of $\dbar K$ around $x$. This divides by two the lhs. of \eqref{equ:plane}. $\bar K_s$ is a symmetric compact set, possibly non-convex but with the property that its intersection with any line parallel to $L^\perp$ is at most two intervals, and symmetric around $L$.
%
%\item (A sort of symmetric version of Blaschke's shaking (Sch\"{u}ttelung))
%
%\item Pick any supporting line $L$ of $K$ through $x$ and apply Blaschke's shaking (Sch\"{u}ttelung) to $K$: for every chord of $K$ perpendicular to $L$, shift it in the direction orthogonal to $L$ so that its endpoint that is nearest to $L$ lies on $L$. The union of the shifted chords is a convex body that we denote $\bar K$. If $\bar K \neq K$, then $\bar K$ has a strictly smaller value than $K$ of the lhs. of \eqref{equ:plane}.
%%Firstly, Blaschke's shaking (ref and more details...) implies that for any minimal pair $(K,x)$ there is a unique supporting line at $x$ and $K$ is invariant under Blaschke's shaking with respect to that line.
%
%\item Let $L^\perp$ be a line orthogonal to $L$ through $x$. Let $\dbar K$ be the Steiner symmetrization of $\bar K$ with respect to $L^\perp$. Again, if $\dbar K \neq \bar K$, then $\bar{\bar K}$ has a strictly smaller value than $\bar K$ of the lhs. of \eqref{equ:plane}, see \cite[follows from Lemma 4]{Groemer} or \cite{Busemann}. So we know now that any minimum pair $K, x$ must be invariant under the previous step and this step. Denote by $\mathcal{S}$ the family of pairs satisfying such invariance.

\item The lhs. of \eqref{equ:plane} just halves if one replaces $\dbar K$ with its symmetrization around $x$, and this symmetrization is a centrally symmetric convex body given the previous two steps. Thus,
\begin{equation}\label{equ:plane2}
\inf_{(K,x) \in \mathcal{S}} \frac{\e_K \vol \conv x, X_1, X_2}{\vol K} \geq 2 \inf_{K'} \frac{\e_{K'} \vol \conv 0, X_1, X_2}{\vol K'}
\end{equation}
where $K'$ ranges over all centrally symmetric convex bodies. Lemma \ref{lem:minimalWithOrigin} implies that ellipses are the only minimizers of the rhs. of \ref{equ:plane2}, and, as a half of an ellipse around the origin with the origin form a pair in $\mathcal{S}$, we conclude that half of a disk centered at $x$ is a minimizer.
\end{enumerate}
To get the rhs. in \eqref{equ:plane}, we just need to evaluate the lhs. for $x = 0$ and $K$ a half of the unit disk. For the numerator, the symmetry of the problem implies that the average for a half-disk and the origin is the same as the average for the disk and the origin. Thus, Theorem \ref{thm:ballorigin} implies
\[
\e \vol \conv x, X_1, X_2 = \frac{4}{9 \pi},
\]
while the denominator in \eqref{equ:plane} is the area of a half-disk, $\pi/2$.
\end{proof}
We believe that a half of an ellipse centered at $x$ is the only kind of minimizer.
%, but to show that one would have to show in Lemma \ref{lem:shaking} that the inequality in \eqref{equ:shaking} is strict whenever $K \neq \bar K$.

\begin{proof}[Proof of Theorem \ref{thm:simplexMonotonicity}]
For the first part ($d \leq 2$), it is clearly true for $d=1$. For $d=2$, Theorem \ref{thm:blaschke} (Blaschke's maximality of the triangle for Sylvester's problem) and Lemmas \ref{lem:randomSimplexCharacterization} and \ref{lem:plane} imply the desired conclusion.

The second part ($d \geq 4$) is a special case of Theorem \ref{thm:higherMoments}.
%For $L$ the ball of volume $\vol(K)$, Theorem \ref{thm:blaschkegroemer} implies
%\[
%\e_{X_i \in K} \vol \conv(X_0, \dotsc, X_d) \geq \e_{X_i \in L} \vol \conv(X_0, \dotsc, X_d).
%\]
%Theorem \ref{thm:ball} implies
%\begin{align*}
%\e_{X_i \in L} \vol \conv(X_0, \dotsc, X_d)
%&=\frac{1}{2} \e_{X_i \in B_d} \vol \conv(X_0, \dotsc, X_d) \\
%&= \frac{1}{d!} \left(\frac{\kappa_{d+1}}{\kappa_d}\right)^{d+1} \frac{\kappa_{d(d+2)}}{\kappa_{(d+1)^2}} \frac{1}{\omega_{d+1}}.
%\end{align*}
%On the other hand, symmetry and Theorem \ref{thm:ballorigin} imply
%\begin{align*}
%\e_{X_i \in K} \vol \conv (0, X_1, \dotsc, X_d) &= \e_{X_i \in B_d} \vol \conv (0, X_1, \dotsc, X_d) \\
%    &= \frac{1}{d!} \left(\frac{\kappa_{d+1}}{\kappa_d}\right)^{d} \frac{2}{\omega_{d+1}}.
%\end{align*}
%Combining the previous claims with Lemma \ref{lem:ratio},
%\begin{align*}
%\frac{\e_{X_i \in K} \vol \conv (0, X_1, \dotsc, X_d)}{\e_{X_i \in K} \vol \conv(X_0, \dotsc, X_d)}
%    &\leq 2 \frac{\kappa_d}{\kappa_{d+1}} \frac{\kappa_{(d+1)^2}}{\kappa_{d(d+2)}} \\
%    &\leq 2 \frac{\sqrt{d+2}}{d+1}
%\end{align*}
%and this is less than 1 for $d\geq 4$. Lemma \ref{lem:randomSimplexCharacterization} completes the proof.
%
%%For $d=3$, an explicit computation ...
\end{proof}
Numerical experiments suggest that a simplex and the center point of a facet work as a counterexample for the monotonicity as in Theorem \ref{thm:simplexMonotonicity} in $\RR^3$, and it should work in higher dimensions. Similar numerical experiments suggest that half of the unit ball and the origin is also a counterexample in $\RR^3$.

\begin{proof}[Proof of Theorem \ref{thm:higherMoments}]
Let $K$ be the half-ball with $x_d \geq 0$.

For $L$ the ball of volume $\vol(K)$, Theorem \ref{thm:blaschkegroemer} implies
\[
\e (V_K^k) \geq \e (V_L^k).
\]
Theorem \ref{thm:ball} implies
\begin{align*}
\e (V_L^k)
&=\frac{1}{2^k} \e(V_{B_d}^k\bigr) \\
&= \frac{1}{2^k (d!)^k} \left(\frac{\kappa_{d+k}}{\kappa_d}\right)^{d+1} \frac{\kappa_{d(d+k+1)}}{\kappa_{(d+1)(d+k)}} \frac{\omega_1 \dotsm \omega_k}{\omega_{d+1} \dotsm \omega_{d+k}}.
\end{align*}
On the other hand, symmetry and Theorem \ref{thm:ballorigin} imply
\begin{align*}
\e_{X_i \in K} \bigl((\vol \conv 0, X_1, \dotsc, X_d)^k \bigr) &= \e_{X_i \in B_d} \bigl((\vol \conv 0, X_1, \dotsc, X_d)^k \bigr) \\
    &= \frac{1}{(d!)^k} \left(\frac{\kappa_{d+k}}{\kappa_d}\right)^{d} \frac{\omega_1 \dotsm \omega_k}{\omega_{d+1} \dotsm \omega_{d+k}}.
\end{align*}
Combining the previous claims we get:
\begin{align}
\frac{\e_{X_i \in K} \bigl((\vol \conv 0, X_1, \dotsc, X_d)^k \bigr)}{\e (V_K^k)}
    &\leq 2^k \frac{\kappa_d}{\kappa_{d+k}} \frac{\kappa_{(d+1)(d+k)}}{\kappa_{d(d+k+1)}} \label{equ:moments}
\end{align}
When $d$ is 2 or 3, a tedious but straightforward use of Stirling's formula shows that \eqref{equ:moments} goes to 0 as $k$ goes to infinity. Lemma \ref{lem:randomSimplexCharacterization} completes the argument in this case.

If $d \geq 4$, Lemma \ref{lem:ratio} in \eqref{equ:moments} gives
\begin{multline*}
\frac{\e_{X_i \in K} \bigl((\vol \conv 0, X_1, \dotsc, X_d)^k \bigr)}{\e(V_K^k )} \\
\begin{aligned}
    &\leq 2^k \left(\frac{(d+2) \dotsm (d+k+1)}{\bigl(d(d+k+1)+1\bigr) \dotsm \bigl(d(d+k+1)+k\bigr)} \right)^{1/2} \\
    & \leq 2^k \left(\frac{d+k+1}{d(d+k+1)+k}\right)^{k/2},
\end{aligned}
\end{multline*}
(using the inequality $a/b \leq (a+1)/(b+1)$ whenever $0 \leq a \leq b$) and this is less than 1 for any $k \geq 1$. Lemma \ref{lem:randomSimplexCharacterization} completes the argument.
%
%For $d=3$, an explicit computation ...
\end{proof}

\begin{lemma}\label{lem:shaking}
Let $K \subseteq \RR^2$ be a convex body, let $x \in \bdry K$, let $L$ be a supporting line of $K$ at $x$. Assume additionally that $K$ is symmetric around the line through $x$ orthogonal to $L$. Let $\bar K$ be Blaschke's shaking of $K$ with respect to $L$. Then
%(i.e. for every chord of $K$ orthogonal to $L$, slide it towards $L$ until the chord touches $L$)
\begin{equation}\label{equ:shaking}
\e_{X_i \in K} (\vol \conv x, X_1, X_2) \geq \e_{X_i \in \bar K} (\vol \conv x, X_1, X_2).
\end{equation}
% with equality iff $K = \bar K$.
\end{lemma}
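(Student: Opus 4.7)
The plan is to set up convenient coordinates: place $L$ along the $x$-axis with $x$ at the origin, so that $K$ lies in the closed upper half-plane and is symmetric about the $y$-axis. For each horizontal coordinate $a$ in the projection of $K$, write the vertical chord of $K$ as $\{a\} \times [\beta(a),\, \beta(a) + \ell(a)]$ with $\beta(a) \geq 0$ (since $L$ supports $K$ at $x$); by the assumed symmetry, both $\beta$ and $\ell$ are even functions. Blaschke's shaking then produces $\bar K = \{(a,y) : 0 \leq y \leq \ell(a)\}$, which corresponds to replacing $\beta$ by $0$ while preserving $\ell$ and the total area.

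Next I would decompose a uniform point $X_i = (A_i, B_i) \in K$ as $B_i = \beta(A_i) + C_i$. Conditionally on $A_i$, the coordinate $C_i$ is uniform on $[0, \ell(A_i)]$, and the marginal density of $A_i$ is proportional to $\ell$; the analogous statement holds in $\bar K$. Hence the joint law of $(A_i, C_i)$ is identical in the two bodies, and only the reconstruction of the second coordinate of $X_i$ from $(A_i, C_i)$ differs. Since $x$ is the origin,
\[
\vol \conv(x, X_1, X_2) = \tfrac{1}{2}\,|A_1 B_2 - A_2 B_1|.
\]

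The key step exploits the left-right symmetry of $K$ by conditioning on $(|A_1|, |A_2|, C_1, C_2)$, under which the signs $\epsilon_i$ of $A_i$ are independent and uniform on $\{\pm 1\}$. Set $U = |A_1|\bigl(\beta(|A_2|) + C_2\bigr)$ and $V = |A_2|\bigl(\beta(|A_1|) + C_1\bigr)$, both nonnegative. Using $|U-V|+|U+V| = 2\max(U,V)$ for $U, V \geq 0$, a short calculation yields
\[
\e_{\epsilon_1, \epsilon_2}\,\tfrac{1}{2}\bigl|\epsilon_1 U - \epsilon_2 V\bigr| = \tfrac{1}{2}\max(U, V).
\]
The identical computation for $\bar K$ (with $\beta \equiv 0$) gives $\tfrac{1}{2}\max(|A_1| C_2,\, |A_2| C_1)$. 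Since $\beta \geq 0$, the $K$-max dominates the $\bar K$-max pointwise, and integrating against the common distribution of $(|A_i|, C_i)$ yields \eqref{equ:shaking}.

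The main point requiring care is the sign-averaging identity: the hypothesized symmetry of $K$ about the line through $x$ orthogonal to $L$ is precisely what makes the four sign patterns of $(A_1, A_2)$ equiprobable and collapses the average to $\tfrac{1}{2}\max(U, V)$, turning the comparison into a pointwise one. Without this symmetry the inequality would reduce to $\e|Y + u| \geq \e|Y|$ for a non-symmetric shift $u$ and random variable $Y$, which is not generally valid; this is why the Steiner symmetrization step must precede the shaking step in the proof of Lemma \ref{lem:plane}.
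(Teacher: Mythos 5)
Your proof is correct, and while the setup coincides with the paper's (chord coordinates with ``bottom'' function $\beta$ and length $\ell$, the observation that the law of $(A_i,C_i)$ is the same in $K$ and in the shaken body, and the use of the mirror symmetry to randomize the signs of $A_1,A_2$), the decisive step is carried out differently. You evaluate the sign-average explicitly via $|U-V|+|U+V|=2\max(U,V)$, reducing the claim to the pointwise monotonicity of $\max\bigl(|A_1|(\beta(|A_2|)+C_2),\,|A_2|(\beta(|A_1|)+C_1)\bigr)$ in $\beta\geq 0$; the paper instead keeps the symmetrized integrand as a sum of two absolute determinants, notes it is convex and invariant under the reflections $\alpha_i\mapsto -\alpha_i-2v_i$, and concludes by a Jensen-type argument that it is minimized when the chords are dropped onto $L$ (i.e.\ at $\alpha_1=\alpha_2=0$). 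The two mechanisms are in fact tightly linked, since the paper's symmetrized integrand equals exactly $2\max(U,V)$, but your route is arguably more transparent: it isolates precisely where the support condition enters (it gives $\beta\geq 0$, hence pointwise domination), whereas in the paper's argument the condition is hidden in the requirement that the convexity weights $\lambda_i=\alpha_i/\bigl(2(\alpha_i+v_i)\bigr)$ lie in $[0,1]$. Conversely, the paper's convexity/reflection-invariance formulation is the one that matches the general heuristic (quoted from Pfiefer in the proof of Lemma \ref{lem:plane}) that such functionals are convex along chord translations, which is what also drives the Steiner symmetrization step. Your closing remark correctly identifies why the axial symmetry hypothesis is essential and why shaking is performed only after symmetrization.
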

\begin{proof}
Without loss of generality, translate and rotate everything so that $x$ is at the origin and $L$ is the ``$x$'' axis. Let $t>0$ be half of the width of $K$ along the $x$ axis. For any $u \in [-t, t]$, define functions $\alpha(u)$ and $l(u)$ so that the vertical chords of $K$ have the form $\{u\} \times [\alpha(u), \alpha(u) + l(u)]$ (i.e., $\alpha$ is the ``bottom'' of the chord and $l$ is its length). We have
\begin{multline*}
\e_{X_i \in K} (\vol \conv 0, X_1, X_2) = \\
\frac{1}{2(\vol K)^2}\int_{-t}^t \int_{0}^{t} \int_0^{l_1(u_1)} \int_0^{l_2(u_2)} \abs{\det \rowmatrix{u1}{\alpha(u_1) + v_1}{u2}{\alpha(u_2)+v_2}} + \\
\abs{\det \rowmatrix{-u1}{\alpha(u_1) + v_1}{u2}{\alpha(u_2)+v_2}}
d v_2 d v_1 d u_2 d u_1.
\end{multline*}
Let $f(\alpha_1, \alpha_2)$ denote the integrand for fixed values of the integration variables:
\[
f(\alpha_1, \alpha_2) = \abs{\det \rowmatrix{u1}{\alpha_1 + v_1}{u2}{\alpha_2+v_2}} +
\abs{\det \rowmatrix{-u1}{\alpha_1 + v_1}{u2}{\alpha_2+v_2}}.
\]
The function $f$ is clearly convex. Moreover
\[
f(\alpha_1, \alpha_2) = f(-\alpha_1 - 2 v_1, \alpha_2) = f(\alpha_1, -\alpha_2 - 2 v_2) = f(-\alpha_1 - 2 v_1, -\alpha_2- 2 v_2).
\]
So, for $\lambda_i = \frac{\alpha_i}{2 (\alpha_i+ v_i)}$ and by convexity we have
\begin{align*}
f(\alpha_1, \alpha_2)
&= (1-\lambda_1) (1-\lambda_2) f(\alpha_1, \alpha_2) + (1-\lambda_1) \lambda_2 f(\alpha_1, -\alpha_2 - 2 v_2) \\
&\qquad + \lambda_1 (1-\lambda_2) f(-\alpha_1 - 2 v_1, \alpha_2) + \lambda_1 \lambda_2 f(-\alpha_1 - 2 v_1, -\alpha_2 - 2 v_2) \\
&\geq f(0,0).
\end{align*}
This in our integral gives
\begin{align*}
\e_{X_i \in K} (\vol \conv 0, X_1, X_2)
&\geq
\frac{1}{2(\vol K)^2}\int_{-t}^t \int_{0}^{t} \int_0^{l_1(u_1)} \int_0^{l_2(u_2)} \abs{\det \rowmatrix{u1}{v_1}{u2}{v_2}} +\\
&\qquad \abs{\det \rowmatrix{-u1}{v_1}{u2}{v_2}}
d v_2 d v_1 d u_2 d u_1\\
& = \e_{X_i \in \bar K} (\vol \conv 0, X_1, X_2).
\end{align*}
\end{proof}

\subsection{Crofton's formula and relatives}

\begin{proposition}[derivative of $\det A(K)$]\label{pro:derivativeAK}
Let $K \subseteq \RR^d$ be an isotropic convex body. Let $v \in \RR^d$ be a unit vector. Let $a = \inf_{x \in K} \inner{v}{x}$, $b = \sup_{x \in K} \inner{v}{x}$. Let $H_t = \{ x \in \RR^d \suchthat \inner{v}{x} \geq t\}$. Let $K_t = K \cap H_t$, $S_t = K \cap \bdry H_t$. Then
\[
\left.\frac{d}{dt} \det A(K_t)\right\rvert_{t=a} = \left( d - \e_{X \in S_a} \bigl(\norms{X}\bigr) \right) \frac{\vol_{d-1} S_a}{\vol K}.
\]
\end{proposition}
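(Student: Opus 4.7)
The plan is to apply Jacobi's formula and reduce to the derivative of the covariance matrix, which is then computed by slicing. Decompose
\[
A(K_t) = \frac{M(t)}{V(t)} - \mu(t)\mu(t)^T,
\]
where $V(t) = \vol K_t$, $M(t) = \int_{K_t} xx^T\,dx$, and $\mu(t) = V(t)^{-1}\int_{K_t} x\,dx$. Fubini along $v$ gives
\[
V(t) = \int_t^b \vol_{d-1}(S_s)\,ds, \qquad M(t) = \int_t^b \int_{S_s} xx^T\,d\sigma(x)\,ds,
\]
and the analogous identity for $\int_{K_t} x\,dx$. For a convex body these slicewise integrands are right-continuous at $s=a$, so the fundamental theorem of calculus yields the one-sided derivatives
\[
V'(a) = -\vol_{d-1}(S_a), \qquad M'(a) = -\int_{S_a} xx^T\,d\sigma,
\]
and similarly for $\frac{d}{dt}\int_{K_t} x\,dx$ at $t=a$.

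By isotropy, $A(K_a) = I$, $\mu(a) = 0$, and $M(a) = V(a)\,I$. Jacobi's identity $\frac{d}{dt}\det A(K_t) = \det A(K_t)\cdot\operatorname{tr}\bigl(A(K_t)^{-1}\, \tfrac{d}{dt}A(K_t)\bigr)$ therefore reduces the determinant derivative at $a$ to $\operatorname{tr} A'(a)$. Because $\mu(a)=0$, the product rule kills the derivative of $\mu(t)\mu(t)^T$ at $a$, so only the quotient $M(t)/V(t)$ contributes:
\[
A'(a) = \frac{M'(a) - V'(a)\,I}{V(a)} = \frac{1}{\vol K}\Bigl(\vol_{d-1}(S_a)\,I - \int_{S_a} xx^T\,d\sigma\Bigr).
\]
Taking the trace and writing $\int_{S_a}\norms{x}\,d\sigma = \vol_{d-1}(S_a)\,\e_{X \in S_a}(\norms{X})$ delivers the stated identity.

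The main technical point is the right-continuity of the slice cross-section function $s \mapsto \vol_{d-1}(S_s)$ at the endpoint $a$, which validates the one-sided derivatives coming from Fubini. By Brunn-Minkowski, $\vol_{d-1}(S_s)^{1/(d-1)}$ is concave on $[a,b]$, hence continuous on $(a,b)$; the right limit at $a$ equals $\vol_{d-1}(S_a)$ by a sandwich---the cone from $S_a$ to any fixed interior point of $K$ lies in $K$ (so $\liminf \geq \vol_{d-1}(S_a)$), while any Hausdorff limit of $S_{s_n}$ with $s_n \to a^+$ lies in $S_a$ by closedness of $K$ (giving $\limsup \leq \vol_{d-1}(S_a)$ via upper semicontinuity of $\vol_{d-1}$). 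Boundedness of $xx^T$ on $K$ transfers this continuity to $M$ and to the first-moment integrand, completing the differentiation under the integral sign.
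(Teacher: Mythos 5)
Your proof is correct and follows essentially the same route as the paper: Jacobi's formula together with isotropy ($A(K_a)=I$, $\mu(K)=0$) reduces the problem to the trace of the one-sided derivative of the second-moment matrix, which is then computed by slicing along $v$ and the quotient rule. The only differences are cosmetic --- you differentiate the matrix quotient $M(t)/V(t)$ and take the trace at the end, whereas the paper takes the trace first and differentiates the scalar $\e_{X\in K_t}\bigl(\norms{X}\bigr)$ --- and you additionally spell out the endpoint continuity of $s\mapsto\vol_{d-1}(S_s)$ that the paper leaves implicit.
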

\begin{proof}
We have
\begin{align*}
A(K_t) &= \e_{X\in K_t} ((X- \mu(K_t))(X- \mu(K_t))^T) \\
&= \e_{X\in K_t} (X X^T) - \mu(K_t) \mu(K_t)^T.
\end{align*}
By isotropy, $\mu(K)=0$ and this implies
\begin{equation}\label{equ:dAk}
\begin{aligned}
\evaluatedat{\frac{d}{dt} A(K_t)}{t=a}
    &= \evaluatedat{\frac{d}{dt} \e_{X\in K_t} (X X^T)}{t=a}.
\end{aligned}
\end{equation}
Use the identity
\[
\frac{d}{dM} \det M = \left(M^{-1} \right)^T \det M
\]
to conclude
\begin{align*}
\frac{d}{dt} \det A(K_t)
    &= \evaluatedat{\frac{d}{dM} \det M}{M=A(K_t)} \cdot \frac{d}{dt} A(K_t) \\
    &= \det \bigl(A(K_t)\bigr) \left(A(K_t)^{-1}\right)^T \cdot \frac{d}{dt} A(K_t)
\end{align*}
where the dot ``$\cdot$'' represents the Frobenius inner product of matrices, $M \cdot N = \sum_{ij} M_{ij} N_{ij}$. This, isotropy and \eqref{equ:dAk} give
\begin{align*}
\evaluatedat{\frac{d}{dt} \det A(K_t)}{t=a}
    &= I \cdot \evaluatedat{\frac{d}{dt} \e_{X \in K_t} (X X^T)}{t=a} \\
    &= \evaluatedat{\frac{d}{dt} \e_{X \in K_t} (\norms{X})}{t=a}.
\end{align*}
To conclude, evaluate the following at $t=a$, using isotropy in the second step:
\begin{align*}
\frac{d}{dt} \e_{X \in K_t} (\norms{X})
    &= \frac{d}{dt} \frac{1}{\vol K_t} \int_t^b \e_{X \in S_\alpha} (\norms{X}) \vol_{d-1}(S_\alpha) \, d\alpha \\
    &= \frac{\vol_{d-1}(S_t)}{\vol K_t} \left( \e_{X \in K_t} (\norms{X}) - \e_{X \in S_t} (\norms{X}) \right).
\end{align*}
\end{proof}

We say that $f:U^q \to V$ is symmetric iff for any permutation $\pi$ of $\{1,\dotsc, q\}$ and any $x \in U^q$ we have $f(x) = f(x_{\pi(1)}, \dotsc, x_{\pi(q)})$.
\begin{proposition}[general derivative, Crofton]\label{pro:Crofton}
Let $K \subseteq \RR^d$ be a convex body. Let $v \in \RR^d$ be a unit vector. Let $a = \inf_{x \in K} \inner{v}{x}$, $b = \sup_{x \in K} \inner{v}{x}$. Let $H_t = \{ x \in \RR^d \suchthat \inner{v}{x} \geq t\}$. Let $K_t = K \cap H_t$, $S_t = K \cap \bdry H_t$. Let $f:{(\RR^d)}^q \to \RR$ be a symmetric continuous function. Let $X_1, \dotsc, X_q$ be independent random points in $K$. Then
\begin{multline*}
\left.\frac{d}{dt}\e f(X_1, \dotsc, X_q) \right\rvert_{t = a} \\
= q \Bigl( \e f(X_1, \dotsc, X_q) - \e \bigl(f(X_1, \dotsc, X_q) \giventhat X_1 \in S_a\bigr)\Bigr)\frac{\vol_{d-1} S_a}{\vol K}
\end{multline*}
\end{proposition}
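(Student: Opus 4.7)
The plan is to express the expectation as a ratio and apply the quotient rule. Let $V(t) = \vol(K_t)$ and
\[
F(t) = \int_{K_t^q} f(x_1, \dotsc, x_q) \, dx_1 \dotsm dx_q,
\]
so that $\e f(X_1, \dotsc, X_q) = F(t)/V(t)^q$ when $X_1, \dotsc, X_q$ are independent uniform random points in $K_t$ (interpreting the statement with $X_i \in K_t$, since at $t=a$ one has $K_a = K$). The quotient rule gives
\[
\evaluatedat{\frac{d}{dt} \frac{F(t)}{V(t)^q}}{t=a} = \frac{F'(a)}{V(a)^q} - \frac{q F(a) V'(a)}{V(a)^{q+1}},
\]
so it suffices to compute the one-sided derivatives $V'(a)$ and $F'(a)$ and then use $V(a) = \vol K$ and $F(a)/V(a)^q = \e f$.

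For $V'(a)$, Fubini applied to the slicing of $K_t$ by the hyperplanes $\bdry H_\alpha$ gives $V(t) = \int_t^b \vol_{d-1}(S_\alpha) \, d\alpha$, and continuity of $\alpha \mapsto \vol_{d-1}(S_\alpha)$ on $[a,b]$, which holds because $K$ is a convex body, yields $V'(a) = -\vol_{d-1}(S_a)$.

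For $F'(a)$ I would use the decomposition $K = K_t \cup K'_t$ with $K'_t = K \cap \{x \suchthat \inner{v}{x} < t\}$, whose volume equals $\int_a^t \vol_{d-1}(S_\alpha) \, d\alpha = O(t-a)$. Expanding $K_t^q = (K \setminus K'_t)^q$ by inclusion–exclusion and using the symmetry of $f$ in its arguments gives
\[
F(t) = F(a) - q \int_{K'_t \times K^{q-1}} f(x_1, \dotsc, x_q) \, dx_1 \dotsm dx_q + R(t),
\]
where $R(t)$ collects all terms in which at least two of the $q$ arguments of $f$ lie in $K'_t$. Since $f$ is continuous on the compact set $K^q$ it is bounded, so $\abs{R(t)} = O(\vol(K'_t)^2) = O((t-a)^2)$, and $R$ does not contribute to $F'(a)$. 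Applying Fubini once more to the linear term,
\[
\int_{K'_t \times K^{q-1}} f \, dx = \int_a^t \left( \int_{S_\alpha} \int_{K^{q-1}} f(x_1, \dotsc, x_q) \, dx_2 \dotsm dx_q \, d\sigma_\alpha(x_1) \right) d\alpha,
\]
shows that its right derivative at $t=a$ equals $\vol_{d-1}(S_a) \vol(K)^{q-1} \e\bigl(f(X_1, \dotsc, X_q) \giventhat X_1 \in S_a\bigr)$, and hence
\[
F'(a) = -q \vol_{d-1}(S_a) \vol(K)^{q-1} \e\bigl(f(X_1, \dotsc, X_q) \giventhat X_1 \in S_a\bigr).
\]

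Substituting these expressions into the quotient-rule identity yields the formula in the statement after straightforward algebra. The main obstacle is the justification of the remainder estimate $R(t) = O((t-a)^2)$ and of the continuity of the section surface areas at the endpoint $\alpha = a$; both are routine given the convexity of $K$ and the continuity (hence boundedness) of $f$ on the compact set $K^q$.
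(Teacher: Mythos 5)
Your proof is correct and takes essentially the same route as the paper: both write the expectation as $\int_{K_t^q} f$ divided by $(\vol K_t)^q$, apply the quotient rule together with the one-sided derivative $-\vol_{d-1}(S_a)$ of $t \mapsto \vol K_t$ at $t=a$, and identify the derivative of the numerator as $-q$ times the slice integral over $S_a \times K^{q-1}$ using Fubini and the symmetry of $f$. The only difference is bookkeeping in that last step --- the paper differentiates the iterated slice integral via the one-variable Leibniz rule, while you expand $K_t^q = (K \setminus K'_t)^q$ by inclusion--exclusion and discard the terms with at least two points in the slab as $O\bigl((t-a)^2\bigr)$ --- and the endpoint-continuity facts you flag are indeed routine consequences of convexity (and are needed, implicitly, in the paper's argument as well).
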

(A slightly different proof should work with the weaker assumption that $f$ is bounded and measurable, but not necessarily continuous.)
\begin{proof}
Use repeatedly the identity
\[
\frac{d}{dt} \int_t^b u(x,t) dx = -u(t,t) + \int_t^b \frac{d}{dt} u(x,t) dx
\]
and the symmetry of $f$ to get
\begin{equation}\label{equ:Crofton}
\begin{aligned}
\left.\frac{d}{dt} \int_{K_t^q} f(x) dx \right\rvert_{t=a}
    &= \left.\frac{d}{dt} \int_{[t,b]^q} \int_{S_{\alpha_1} \times \dotsm \times S_{\alpha_q}} f(x) dx \, d\alpha \right\rvert_{t=a} \\
    &= - q \int_{[a,b]^{q-1}} \int_{S_a} \int_{S_{\alpha_2} \times \dotsm \times S_{\alpha_q}} f(x) dx_q \dotsm dx_2 \,dx_1 \, d\alpha_q \dotsm d\alpha_2
\end{aligned}
\end{equation}
Now,
\begin{align*}
&\left.\frac{d}{dt}\e_{X_i \in K_t} f(X_1, \dotsc, X_q) \right\rvert_{t = a} \\
    &= \left.\frac{d}{dt} \frac{1}{(\vol K_t)^q} \int_{K_t^q} f(x) dx \right\rvert_{t = a} \\
    &= \left.\frac{1}{(\vol K_t)^{2q}} \left( (\vol K_t)^q \left[\frac{d}{dt} \int_{K_t^q} f(x) dx \right] + q (\vol K_t)^{q-1} \vol_{d-1} (S_t) \int_{K_t^q} f(x) dx  \right) \right\rvert_{t = a} \\
    &= \frac{\vol_{d-1} (S_a)}{\vol K} \left( \frac{1}{(\vol K)^{q-1} \vol_{d-1} (S_a)} \left[\frac{d}{dt} \int_{K_t^q} f(x) dx \right]_{t = a} + \frac{q}{(\vol K)^{q} }  \int_{K^q} f(x) dx  \right)
\end{align*}
and to conclude use \eqref{equ:Crofton} and interpret the integrals as expectations.
\end{proof}

\subsection{Proof of Lemma \ref{lem:detVSsimplex}}
\begin{proof}[Proof of Lemma \ref{lem:detVSsimplex}]
If $Y$ is a random $d$-dimensional vector with second moments and $Y_1, \dotsc, Y_d$ are identically distributed independent copies of $Y$, then the following identity is known and easy to verify by expanding the determinant:
\begin{equation}\label{equ:blaschke}
\det \e Y Y^T = \frac{1}{d!} \e \left((\det Y_1, \dotsc, Y_d)^2\right).
\end{equation}
The first identity in the lemma follows immediately from this. To get the second identity (Equation \eqref{equ:detVSsimplex}), let $X_0$ be random in $K$ and consider:
\begin{align*}
V_K
&= \frac{1}{d!} \abs{\det (X_1-X_0, \dotsc, X_d-X_0)} \\
&=\frac{1}{d!} \abs{\det \begin{pmatrix} X_0 & \dotsm & X_d\\ 1 & \dotsm & 1
    \end{pmatrix}}.
\end{align*}
Taking expectation of the squares and using Equation \eqref{equ:blaschke} we get:
\begin{align*}
\e \left(V_K^2\right)
&= \frac{d+1}{d!} \det \begin{pmatrix}
    \e X X^T & \mu(K) \\
    \mu(K)^T & 1
\end{pmatrix}.
\end{align*}
The left hand side is invariant under translation of $K$, so the right hand side must be too and without loss of generality we can assume $\mu(K) = 0$. Equation \eqref{equ:detVSsimplex} follows.
\end{proof}

\section{Discussion}

A few open questions related to this work:
\begin{enumerate}
\item (Random polytopes) As mentioned in the introduction, Meckes and Reitzner asked for the monotonicity of the expected volume of a random polytope with $n$ vertices, not just a random simplex as in the current paper. It is easy to see that given $d$-dimensional convex bodies $K$, $L$ with $K \subset L$ there exists $n_0 = n_0(K,L)$ such that for $n \geq n_0$ we have
    \[
\e_{X_0, \dotsc, X_n \in K} \vol \conv X_0,\dotsc, X_n \leq \e_{X_0, \dotsc, X_n \in L} \vol \conv X_0,\dotsc, X_n.
    \]
    Can one choose $n_0$ so that it may depend on $d$ but is independent of $K$ and $L$?

%\item (Moments) What about the monotonicity of the expectation of the $p$th power of the volume of a random polytope, for $p \geq 1$?
%
\item (3-D case) For Meckes's strong conjecture, find an easy argument to disprove it for $d=3$.

\item (Slicing conjecture) Understand Meckes's weak conjecture.

\item (Sylvester's problem) Show that among all $d$-dimensional convex bodies,
\[
K \mapsto \frac{\e_{X_i \in K}( \vol \conv (X_0, \dotsc, X_d))}{\vol(K)}
\]
is maximized if $K$ is a simplex. (This is known to imply the slicing conjecture \cite{Giannopoulos}.)
\end{enumerate}

\section{Acknowledgments}
The author would like to thank Daniel Dadush, Navin Goyal, Mark Meckes and Santosh Vempala for suggesting some of these questions and helpful discussions.

\section*{Appendix}
For completeness, we prove here the equivalence between the slicing conjecture, Meckes's weak conjecture and Vempala's question. Slight variations of the following argument have been given by Mark Meckes \cite{Meckesp} and independently later by Santosh Vempala and Daniel Dadush \cite{DVp}. The main ingredients are Klartag's answer to the isomorphic slicing problem and a Khinchine-type inequality (reverse H\"older inequality).

It is known \cite{Ball1988}, \cite[Section 1.5]{Giannopoulos} that the slicing conjecture as stated in the introduction (in terms of hyperplane sections) is equivalent to the existence of a universal upper bound to the isotropic constant of a convex body, defined as follows: given a convex body $K \subseteq \RR^d$, the isotropic constant $L_K$ of $K$ is given by
\begin{equation}\label{equ:isotropicConstant}
    L_K^{2d} = \frac{\det A(K)}{(\vol K)^2}.
\end{equation}
\begin{conjecture}[slicing conjecture]\label{conj:slicing}
There exists a universal constant $c_3>0$ such that for any $d$ and any convex body $K \subseteq \RR^d$ we have $L_K \leq c_3$.
\end{conjecture}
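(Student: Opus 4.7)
The final statement is the slicing (hyperplane) conjecture, one of the major open problems in asymptotic convex geometry; no complete proof is known, so any plan sketched here is necessarily aspirational rather than definitive. The paper itself does not attempt to prove it; in the appendix it only establishes the equivalence with the Meckes/Vempala formulations. Nonetheless, let me outline the standard framework one would attempt.

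By affine invariance of $L_K$, one may assume $K$ is isotropic, so $A(K) = I$ and \eqref{equ:isotropicConstant} reduces the conjecture to showing $\vol K \geq c_0^d$ for an absolute constant $c_0 > 0$, uniformly over isotropic convex bodies $K \subseteq \RR^d$. The plan combines three ingredients: (i) log-concavity of the uniform measure on $K$ together with Borell's lemma giving reverse H\"older control of moments of linear functionals, (ii) concentration of $\norm{X}$ around its mean $\sqrt{d}$ for $X$ uniform in an isotropic $K$ (Paouris-type large deviation bounds), and (iii) a reduction step that replaces $K$ by a more tractable body.

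Two main reduction strategies have been pursued. Bourgain's original approach uses $\eps$-nets on the sphere combined with $\psi_2$-type estimates on one-dimensional marginals to bound the integral of $\exp(\inner{v}{X}/t)$ uniformly in $v$; the isotropic constant is then extracted by optimizing $t$. The alternative, currently more powerful, is Klartag's isomorphic slicing: for any $\eps > 0$ one constructs a convex body $T$ with Banach--Mazur distance $d_{BM}(K,T) \leq 1+\eps$ whose isotropic constant is controlled, and transfers the bound back using continuity of $L_K$ under small perturbations. A third, conditional, route is the $\psi_2$ strategy: prove that every marginal of an isotropic log-concave measure is subgaussian, which is strictly stronger than the slicing conjecture.

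The hard part, and the reason the conjecture is still open, is that every known technique loses at least a subpolynomial factor in $d$: Bourgain obtained $L_K = O(d^{1/4}\log d)$, Klartag improved this to $O(d^{1/4})$, and recent breakthroughs of Chen and of Klartag--Lehec via stochastic localization have reduced it to polylogarithmic and then $\sqrt{\log d}$, but a genuinely universal constant appears to require a qualitatively new concentration principle for log-concave measures (for example a sharp KLS-type spectral gap bound with the optimal constant, or a new decomposition of isotropic bodies). Supplying such an input is well beyond the scope of a routine proof plan.
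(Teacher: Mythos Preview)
You are correct that the paper states this as a conjecture and makes no attempt to prove it; it appears only so that the appendix can record the equivalence with Meckes's weak conjecture and Vempala's question. Your survey of partial approaches is accurate background, but since the paper offers no proof there is nothing to compare against, and your identification of the statement as open is the right response.
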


We now state Klartag's result. For a pair of convex bodies $K, M \subseteq \RR^d$, let the Banach-Mazur distance be
\begin{align*}
d_{BM}(K,M) := \inf \{a &\geq 1 \suchthat K \subseteq T(M) \subseteq aK, \\
&\text{ $T :\RR^d \to \RR^d$ is a non-singular affine transformation}\}.
\end{align*}
\begin{theorem}[isomorphic slicing problem, \cite{Klartag}]\label{lem:klartag}
There exists $c>0$ such that if $K \subseteq \RR^d$ is a convex body and $\eps>0$, then there exists a convex body $M\subseteq \RR^d$ such that
\begin{itemize}
\item $d_{BM} (K,M) < 1+\eps$,
\item $L_M < \frac{c}{\sqrt{\eps}}$.
\end{itemize}
\end{theorem}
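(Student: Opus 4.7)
The plan is to construct $M$ by regularizing the uniform distribution on $K$ with a Gaussian tilt and then associating a convex body to the resulting log-concave measure. The intuition is that such a perturbation produces a density whose associated body is close to $K$ but whose covariance-to-volume ratio (i.e., isotropic constant) is better controlled. By the affine invariance of $d_{BM}(\cdot,\cdot)$ and of $L_K$, I would first assume without loss of generality that $K$ is isotropic.

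For a parameter $t > 0$ to be chosen, consider the log-concave probability measure
\[
d\nu_t(x) = Z_t^{-1}\, \mathbf{1}_K(x)\, e^{-t\, \norms{x}/2}\, dx,
\]
which interpolates between the uniform measure on $K$ at $t=0$ and a truncated Gaussian concentrated near the origin as $t \to \infty$. Associated to $\nu_t$ I would define a convex body $M_t$ via a standard construction---for instance a level set of the log-Laplace transform $\psi_t(\xi) = \log \int e^{\inner{\xi}{x}}\, d\nu_t(x)$, or an analogous construction turning a log-concave density into a body---normalized so that $M_0$ coincides with $K$ up to an affine transformation. The final choice will be $t$ proportional to $\eps$.

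For the Banach--Mazur estimate, one compares support functions. Since $K$ is isotropic, $\e \norms{X} = d$ and standard concentration bounds for log-concave measures imply $\norms{x}$ stays $O(d)$ on the bulk of $K$. Hence the Gaussian tilt $e^{-t\, \norms{x}/2}$ is essentially $1$ on most of $K$ when $t$ is small, and a careful comparison should yield $d_{BM}(K, M_t) \leq 1 + Ct$ (linear, not square-root, in $t$).

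The main obstacle is the isotropic bound $L_{M_t} \leq c/\sqrt{t}$. The key estimate is an upper bound on the second moments of $\nu_t$ in every direction: the Gaussian tilt forces $\e_{\nu_t}\bigl(\inner{u}{X}\bigr)^2 = O(1/t)$ once $t$ is above a threshold, via a direct computation exploiting log-concavity together with a reverse H\"older (Khinchine-type) inequality as hinted in the Appendix preamble. This gives $\det A(\nu_t) = O(t^{-d})$, and combined with a lower bound on $\vol(M_t)$ coming from the normalization, one obtains $L_{M_t}^{2d} = \det A(M_t)/(\vol M_t)^2 \leq C t^{-d}$, hence $L_{M_t} \leq c/\sqrt{t}$. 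Choosing $t$ proportional to $\eps$ then balances the two estimates simultaneously and yields the stated bounds $d_{BM}(K,M) < 1+\eps$ and $L_M < c/\sqrt{\eps}$.
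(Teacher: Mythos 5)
First, note that the paper does not prove this statement at all: it is quoted verbatim from Klartag's paper on the isomorphic slicing problem, so you are attempting to re-derive a deep external theorem. Your sketch, as it stands, has a genuine gap, and it sits exactly at the point where the theorem is hard. The isotropic constant of your tilted measure $\nu_t$ is, up to universal factors, $(\sup_x f_t(x))^{1/d}(\det \operatorname{cov}\nu_t)^{1/(2d)}$ with $f_t = Z_t^{-1}\mathbf{1}_K e^{-t\norms{x}/2}$. The covariance bound you invoke is fine (Brascamp--Lieb gives directional variance at most $1/t$ for all $t$), but the normalization term is not: $Z_t \leq \vol K$, so $(\sup f_t)^{1/d} = Z_t^{-1/d} \geq \vol(K)^{-1/d} = L_K$ in the paper's isotropic normalization, and your estimate only yields $L_{M_t} \lesssim L_K/\sqrt{t}$, not $c/\sqrt{t}$. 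This is not an artifact of the estimate: for $t$ of order $\eps$ (a constant independent of $d$, which is what your Banach--Mazur step requires), $\nu_t$ is a small perturbation of the uniform measure on $K$, so its isotropic constant stays comparable to $L_K$ and nothing is gained; while for $t$ large enough that the Gaussian factor genuinely suppresses the covariance, any of the standard body constructions (Ball's body, level sets of the log-Laplace transform) produces essentially $K$ intersected with a Euclidean ball of radius about $\sqrt{d/t}$, and then the claimed bound $d_{BM}(K,M_t) \leq 1 + Ct$ fails badly (an isotropic simplex has diameter of order $d$, so such a truncation is a macroscopic change for fixed $t$). A centered, rotation-invariant tilt therefore cannot balance the two requirements by any choice of $t$: small $t$ leaves $L$ unchanged, large $t$ destroys the Banach--Mazur closeness.

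The missing idea in Klartag's actual argument is that the tilt is \emph{linear}, $e^{\inner{\xi}{x}}\mathbf{1}_K(x)$, not Gaussian: such a tilt re-weights and re-centers the measure (changing its covariance substantially) while the convex body associated to it stays within Banach--Mazur distance controlled by the size of $\xi$; and the existence of a single good $\xi$ with $\norm{\xi}$ small is established non-constructively, by averaging over all $\xi$ in a small multiple of a dual body via the logarithmic Laplace transform (whose Hessian determinant integrates to a volume). That averaging step is precisely where the $1+\eps$ versus $c/\sqrt{\eps}$ trade-off comes from, and it has no counterpart in your sketch. If you want a complete argument you should either reproduce that scheme or simply cite Klartag, as the paper does; the Khinchine-type inequality from the Appendix plays no role in this particular theorem.
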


Here is the Khinchine-type inequality that wee need:
\begin{lemma}[{\cite[Appendix III]{MilmanSchechtman}} {\cite[Section 2.1]{Giannopoulos}} {\cite[p. 717]{GiannMilman}}]\label{lem:khinchine}
There exists a constant $c>0$ such that if $f : \RR^d \to \RR^+$ is a semi-norm, $K \subseteq \RR^d$ is a convex body and $1\leq p < \infty$, then
\[
\frac{1}{\vol K} \int_K f(x) \, dx \leq \left(\frac{1}{\vol K} \int_K f(x)^p \, dx \right)^{1/p} \leq \frac{c p}{\vol K}\int_K f(x) \, dx.
\]
\end{lemma}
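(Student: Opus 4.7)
The plan is to prove the left inequality by Jensen and the right inequality by a Borell-type concentration estimate followed by a layer-cake integration.

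For the lower bound, the convexity of $s \mapsto s^p$ on $[0,\infty)$ together with Jensen's inequality gives
\[
\Bigl(\tfrac{1}{\vol K}\int_K f\,dx\Bigr)^p \leq \tfrac{1}{\vol K}\int_K f^p\,dx,
\]
which is the left-hand inequality after taking $p$-th roots. No convexity of $f$ is needed here.

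For the upper bound I would rescale $f$ so that $M := \tfrac{1}{\vol K}\int_K f\,dx = 1$ and work with the uniform probability measure $\mu_K$ on $K$, which is log-concave because $K$ is convex. Since $f$ is a semi-norm, it is convex, non-negative, $1$-homogeneous, and satisfies $f(-x)=f(x)$, so the sublevel sets $A_t := \{x \in \RR^d : f(x) \leq t\}$ are symmetric convex sets with $sA_t = A_{st}$ for all $s > 0$. Markov's inequality applied to $f$ gives $\mu_K(A_2) \geq 1/2$. The key step is then Borell's lemma applied to the log-concave probability measure $\mu_K$ and the symmetric convex set $A_2$: there is an absolute constant $c_1$ such that for every $s \geq 1$,
\[
\mu_K(f > 2s) \;=\; \mu_K(K \setminus sA_2) \;\leq\; c_1 \cdot 2^{-s}.
\]
The symmetry hypothesis of Borell's lemma is on $A_2$, not on $K$, so nothing forces the ambient body to be centrally symmetric.

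Given this tail estimate, the layer-cake formula
\[
\tfrac{1}{\vol K}\int_K f^p\,dx \;=\; p\int_0^\infty t^{p-1}\mu_K(f>t)\,dt
\]
reduces the remaining task to an elementary calculation: split the integral at $t=2$, bound the first piece by $2^p$ and the tail by $p\int_2^\infty t^{p-1} c_1 \cdot 2^{-t/2}\,dt$, which is a shifted Gamma integral of order $(c_2 p)^p$ up to absolute constants. Taking $p$-th roots yields the advertised bound $cp \cdot M$. The principal obstacle is the concentration step: one must be slightly careful to isolate the dependence on $p$ in the integrated Gamma-type bound (so that the final constant grows exactly linearly in $p$ rather than, say, quadratically) and to verify that Borell's lemma is being used in the version where only $A_2$, not $K$, needs to be symmetric, which is exactly the Brunn-Minkowski-based form.
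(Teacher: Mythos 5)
The paper does not actually prove Lemma \ref{lem:khinchine}; it is quoted from the cited references, and your argument is essentially the standard proof found there (Borell's lemma for the uniform measure on a convex body plus a layer-cake computation), so the route is the expected one. The left inequality via Jensen/H\"older is fine; you are also right that Borell's lemma only requires the sublevel set of the semi-norm, not $K$ itself, to be convex and symmetric, with the Brunn--Minkowski/log-concavity input coming from the measure, and the $p$-th root of the resulting Gamma-type bound is indeed $O(p)$, giving the linear dependence on $p$.

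One quantitative step fails as written: Markov at level $2M$ only gives $\mu_K(A_2)\ge 1/2$, and Borell's lemma with $\theta=\mu_K(A)=1/2$ is vacuous, since its bound $\theta\bigl((1-\theta)/\theta\bigr)^{(t+1)/2}$ equals $1/2$ for every $t$ and yields no decay; in particular your claimed tail estimate $\mu_K(f>2s)\le c_1\,2^{-s}$ does not follow from the set $A_2$. The fix is routine: apply Markov at level $3M$ (or any $\lambda M$ with $\lambda>2$), so that $\mu_K(A_3)\ge 2/3>1/2$ and Borell's lemma gives $\mu_K(K\setminus tA_3)\le \tfrac{2}{3}\,2^{-(t+1)/2}$ for $t\ge 1$; the rest of your layer-cake computation goes through unchanged and still gives the bound $c\,p\,M$ after taking $p$-th roots. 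You should also dispose of the degenerate case $\int_K f=0$ (then $f\equiv 0$ on $K$ and the inequality is trivial) before normalizing $M=1$.
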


%For a convex body $K \subseteq \RR^d$ and $X_0, \dotsc, X_d$ random in $K$, let $V_K$ be the volume of the convex hull of $X_0, \dotsc, X_d$.
%For a pair of convex bodies $K, M \subseteq \RR^d$, let their \emph{volume ratio} be
%\[
%\vr(M,K) := \inf_T \left(\frac{\vol M}{\vol T (K)}\right)^{1/d}
%\]
%where the infimum is over all non-singular affine transformations $T:\RR^d \to \RR^d$ such that $T(K) \subseteq M$.

The following proposition states the desired equivalences between the slicing conjecture and the monotonicity questions.
\begin{proposition}
For any $1 \leq p < \infty$, the following claims are equivalent:
\begin{enumerate}
\item (Vempala's question) There exists $c_1>0$ such that for any pair of convex bodies $K, M \subseteq \RR^d$ we have
\[
K \subseteq M \implies \det A(K) \leq c_1^{d} \det A(M).
\]
\item (Meckes's weak conjecture for $p$th moment) There exists $c_2>0$ such that for any pair of convex bodies $K, M \subseteq \RR^d$ we have
\[
K \subseteq M \implies \e (V_K^p) \leq c_2^d \e (V_M^p).
\]

\item (The slicing conjecture) Conjecture \ref{conj:slicing}.
\end{enumerate}
\end{proposition}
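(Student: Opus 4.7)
The plan is to establish (1) $\iff$ (3) directly and (1) $\iff$ (2) via Lemma~\ref{lem:detVSsimplex} (for the $p = 2$ case) together with a Khinchine--Kahane-type comparison of moments of $V_K$. For (3) $\implies$ (1): by the slicing conjecture, $L_K \leq c_3$ universally, so $\det A(K) = L_K^{2d}(\vol K)^2 \leq c_3^{2d}(\vol K)^2$. Conversely, Theorem~\ref{thm:blaschkegroemer} applied with $k = 2$ and combined with Lemma~\ref{lem:detVSsimplex} gives $L_K \geq L_{B_d}$ for every $K$, and a direct computation using $A(B_d) = (d+2)^{-1} I$ and Lemma~\ref{lem:ratio} shows $L_{B_d} \geq \ell$ for a universal constant $\ell > 0$. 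Hence when $K \subseteq M$ we have $\det A(M) \geq \ell^{2d}(\vol K)^2$, giving $\det A(K)/\det A(M) \leq (c_3/\ell)^{2d}$, which is (1) with $c_1 = (c_3/\ell)^2$.

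For (1) $\implies$ (3): given $K$, apply Klartag's theorem (Theorem~\ref{lem:klartag}) with $\eps = 1$ to obtain $M$ with $d_{BM}(K, M) \leq 2$ and $L_M \leq c$. Since $L_K$, $L_M$, and $\det A(\cdot)/(\vol \cdot)^2$ are affine invariants, and $d_{BM}$ is affine invariant by definition, after composing with the affine transformation realizing the Banach--Mazur distance we may assume $K \subseteq M \subseteq 2K$. Applying (1),
\[
\det A(K) \leq c_1^d \det A(M) = c_1^d L_M^{2d}(\vol M)^2 \leq c_1^d c^{2d} 4^d (\vol K)^2,
\]
which yields $L_K \leq 2c\sqrt{c_1}$, a universal upper bound on the isotropic constant.

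For (1) $\iff$ (2): the $p = 2$ case is Lemma~\ref{lem:detVSsimplex}, which identifies $\e V_K^2$ with $\frac{d+1}{d!}\det A(K)$, so the dimensional factor cancels in ratios. For general $p \geq 1$, $V_K$ is the absolute value of a polynomial of degree $d$ in the coordinates of $(X_0,\ldots,X_d) \in K^{d+1}$; in fact, the high-order terms cancel by antisymmetry of the determinant, so $V_K$ is the absolute value of an affine function of each single $X_i$ (with the others fixed). Iterating Lemma~\ref{lem:khinchine} over the $d+1$ variables (together with Jensen's inequality $\|\cdot\|_{L^1} \leq \|\cdot\|_{L^q}$ for $q \geq 1$), or equivalently invoking a Bourgain-type Khinchine--Kahane inequality for polynomials of bounded degree on convex bodies, yields constants $A_p, B_p$ depending only on $p$ such that
\[
A_p^{-(d+1)}(\e V_K^2)^{1/2} \leq (\e V_K^p)^{1/p} \leq B_p^{d+1}(\e V_K^2)^{1/2}.
\]
Combining this moment comparison with the $p = 2$ equivalence of (1) and (2) gives (1) $\iff$ (2) for every $p \geq 1$, since the factors $A_p^{d+1}$ and $B_p^{d+1}$ can be absorbed into the universal ($p$-dependent) constants $c_1$, $c_2$.

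The main obstacle will be the last moment-comparison step. While Lemma~\ref{lem:khinchine} applies directly to $V_K$ as a semi-norm of each single variable (after translation), iterating across variables is delicate: after integrating out one variable, the resulting function of the remaining variables takes the form $\sqrt{\|B(X - X^0)\|^2 + r}$ with a typically positive ``residual variance'' $r$, which is convex but not a semi-norm of the remaining variable. A full proof must either carry out this iteration carefully (tracking constants through several steps) or appeal to a Khinchine--Kahane reverse H\"older inequality for polynomials of bounded degree on convex bodies, in the spirit of the tools cited for Lemma~\ref{lem:khinchine}.
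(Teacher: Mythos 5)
Your proofs of (3) $\implies$ (1) and (1) $\implies$ (3) are correct and essentially the paper's own argument; the one departure --- deriving the universal lower bound $L_K \geq L_{B_d} \geq \ell$ from Theorem \ref{thm:blaschkegroemer} with $k=2$ together with Lemma \ref{lem:detVSsimplex} and $A(B_d)=(d+2)^{-1}I$, instead of quoting the known lower bound on the isotropic constant --- is sound and has the merit of using only results already stated in the paper.

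The gap is exactly where you flag it: the two-sided comparison between $(\e V_K^p)^{1/p}$ and $(\e V_K^2)^{1/2}$ is asserted, not proved, and it is the only nontrivial content of (1) $\Leftrightarrow$ (2). Moreover, the iteration you sketch is set up in the wrong order: if you integrate a variable out (to an $L^1$ or $L^2$ average) before applying the Khinchine step, you are indeed left with functions of the form $\sqrt{\norms{B(X-X^0)}+r}$, to which Lemma \ref{lem:khinchine} does not apply. The fix --- and what the paper's ``iterated use of Lemma \ref{lem:khinchine}'' amounts to --- is to compare the $p$-th moment with the \emph{first} moment, peeling variables off from the outside while keeping the $L^p$-norm over the variables not yet treated. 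For $-1\le j\le d$ set
\[
M_j \;=\; \e_{x_0\in K}\dotsm\e_{x_j\in K}\Bigl(\e_{x_{j+1}\in K}\dotsm\e_{x_d\in K} V^p\Bigr)^{1/p},
\]
so that $M_{-1}=(\e V_K^p)^{1/p}$ and $M_d=\e V_K$. For fixed $x_0,\dotsc,x_{j-1}$, the function
\[
x_j \;\longmapsto\; \Bigl(\e_{x_{j+1}\in K}\dotsm\e_{x_d\in K} V^p\Bigr)^{1/p}
\]
is the $L^p$-norm (over the remaining variables) of a vector-valued \emph{affine} function of $x_j$ --- here one uses your own observation that $V$ is $\abs{\text{affine}}$ in each variable separately, together with Minkowski's integral inequality --- that is, it has the form $x_j\mapsto N(Tx_j+w)$ with $N$ a semi-norm. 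The one-variable Khinchine inequality applies to such functions (the standard proof of Lemma \ref{lem:khinchine} via Borell's lemma uses only convexity of sublevel sets and log-concavity of the uniform measure, hence covers semi-norms composed with affine maps), giving $M_{j-1}\le c\,p\,M_j$, and no residual term ever appears. Chaining over $j$ and Jensen's inequality give
\[
\e V_K \;\le\; \bigl(\e V_K^p\bigr)^{1/p} \;\le\; (c\,p)^{d+1}\,\e V_K ,
\]
and applying this for the exponents $p$ and $2$, together with Lemma \ref{lem:detVSsimplex}, yields (1) $\Leftrightarrow$ (2) with constants of the admissible form $c_p^{\,d}$. Your fallback of invoking a Khinchine--Kahane inequality for polynomials of degree $d$ on log-concave measures (Bourgain, Carbery--Wright) would also legitimately close the gap, since its constant $(cp)^{O(d)}$ is of the allowed form, but it is a heavier tool than the paper needs and is not among the cited results.
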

\begin{proof}
3 $\implies$ 1: Let $K, M \subseteq \RR^d$ be convex bodies such that $K \subseteq M$. Then (using Equations \eqref{equ:detVSsimplex} and \eqref{equ:isotropicConstant})
\begin{equation}\label{equ:constantVSsimplex}
\frac{\e V_K^2}{(\vol K)^2} = \frac{d+1}{d!} \frac{\det A(K)}{(\vol K)^2}
 = \frac{d+1}{d!} L_K^{2d},
\end{equation}
and we have a similar equality for $M$.

It is know that the isotropic constant has a universal lower bound $c >0$ over all dimensions and all convex bodies \cite{Bourgain1986,Ball1988,MilmanPajor}. This with our assumption implies $c \leq L_M, L_K \leq c_3$. That is (using Equation \eqref{equ:constantVSsimplex}),
\[
\frac{\e V_K^2}{(\vol K)^2} \leq \frac{d+1}{d!} c_3^{2d}
\]
and
\[
\frac{d+1}{d!} c^{2d} \leq \frac{\e V_M^2}{(\vol M)^2}.
\]
This and the fact that $\vol K \leq \vol M$ give
\[
\e \bigl(V_K^2\bigr) \leq \left(\frac{c_3}{c}\right)^{2d}\e \bigl( V_M^2 \bigr).
\]

1 $\implies$ 3:
%Claim: Assuming 1, for any pair of convex bodies $K, M \subseteq \RR^d$ we have $L_K \leq \sqrt{c_1}\vr(M,K) L_M$
%
%Proof of claim: As $\e V_K^2 /(\vol K)^2$ and $\vr$ are affinely invariant, without loss of generality we can assume that $K \subseteq M$ and $\vr(M,K)^d = \vol M / \vol K$. Then
%\[
%\frac{\e (V_K^2)}{(\vol K)^2} \leq c_1^{d} \frac{\e (V_M^2)}{(\vol K)^2} \leq c_1^d \vr(M,K)^{2d} \frac{\e (V_M^2)}{(\vol M)^2}.
%\]
%This and Equation \eqref{equ:constantVSsimplex} imply the claim.
%
%Claim: For any pair of convex bodies $K, M \subseteq \RR^d$ we have $\vr(M,K) \leq d_{BM}(M,K)$.
%
%Proof of claim: As $\vr$ and $d_{BM}$ are affinely invariant, without loss of generality we can assume $K \subseteq M \subseteq d_{BM}(K,M) K$. Then
%\[
%\vr(M,K) \leq \left( \frac{\vol M}{\vol K} \right)^{1/d} \leq d_{BM}(K,M).
%\]
The positive solution to the isomorphic slicing problem (Lemma \ref{lem:klartag}) with $\eps=1$ implies that there exists a constant $c>0$ such that for any convex body $K \subseteq \RR^d$ there exists another convex body $M \subseteq \RR^d$ satisfying $d_{BM}(K,M) \leq 2$ and $L_M \leq c$. Consider an arbitrary convex body $K \subseteq \RR^d$ and let $M$ be the convex body given by the lemma. As the isotropic constants $L_K$, $L_M$ and $\det A(\cdot)$ are invariant under affine transformations, we can assume without loss of generality that $K \subseteq M \subseteq 2 K$. This and \eqref{equ:isotropicConstant} imply
%\[
%L_K \leq \sqrt{c_1} \vr(M,K) L_M \leq \sqrt{c_1} d_{BM}(K,M) L_M \leq 2 c \sqrt{c_1}.
%\]
\[
L_K^{2d} = \frac{\det A(K)}{(\vol K)^2} \leq 2^{2d} \frac{ c_1^d \det A(M)}{(\vol M)^2}
\leq (2 c \sqrt{c_1})^{2d}.
\]

1 $\Leftrightarrow$ 2: This is an easy consequence of our Khinchine-type inequality (Lemma \ref{lem:khinchine}) and Equation \eqref{equ:constantVSsimplex}: Iterated use of Lemma \ref{lem:khinchine} implies
\[
\e V_K \leq \bigl(\e( V_K^p ) \bigr)^{1/p} \leq c^{d+1} p^{d+1} \e V_K.
\]
The claimed equivalence follows.
\end{proof}

\bibliographystyle{abbrv}    % bibliography using BibTex format
\bibliography{sylvester}

\end{document}